\newtheorem{thm}{Theorem}[section]
\theoremstyle{plain}
\newtheorem{lemma}{Lemma}[section]
\newtheorem{prop}[lemma]{Proposition}
\newtheorem{coro}[lemma]{Corollary}
\theoremstyle{definition}
\newtheorem{defn}[lemma]{Definition}
\newtheorem{remark}[lemma]{Remark}
\newcommand{\jkh}[1]{\left\langle #1 \right\rangle}
\newcommand{\ykh}[1]{\left( #1 \right)}
\newcommand{\hkh}[1]{\left\{ #1 \right\}}
\newcommand{\sqz}[1]{\left \lceil  #1 \right \rceil}
\newcommand{\xqz}[1]{\left \lfloor #1\right \rfloor}
\newcommand{\DB}{\operatorname{\mathcal{D}}^b}
\newcommand{\GL}{\operatorname{GL}}
\newcommand{\Hom}{\operatorname{Hom}}
\newcommand{\Stab}{\operatorname{Stab}}
\newcommand{\CA}{\mathcal{A}}
\newcommand{\CE}{\mathcal{E}}
\newcommand{\CK}{\mathcal{K}}
\newcommand{\CL}{\mathcal{L}}
\newcommand{\CO}{\mathcal{O}}
\newcommand{\CP}{\mathcal{P}}
\newcommand{\CR}{\mathcal{R}}
\newcommand{\CT}{\mathcal{T}}
\newcommand{\BC}{\mathbb{C}}
\newcommand{\BH}{\mathbb{H}}
\newcommand{\BP}{\mathbb{P}}
\newcommand{\BZ}{\mathbb{Z}}
\newcommand{\BR}{\mathbb{R}}
\begin{document}

\title{Spaces of stability conditions via exceptional collection: length 4 case}
\author{Yun-Feng Wang}

\address{School of Mathematical Sciences\\
University of Science and Technology of China, Hefei, Anhui, 230026, People’s Republic of China\\
}
\email{yfwang89@mail.ustc.edu.cn}

\date{}

\begin{abstract}
In this paper, we study the space of stability conditions on triangulated categories generated by an exceptional collection. We give an exact description of subspace of  stability conditions generated by length 4 complete exceptional collection. 
\end{abstract}

\maketitle

\section{Introduction}
The space of \emph{stability conditions} on a triangulated category $\CT$ was introduced by Bridgeland \cite{Bridgeland2007}, with inspiration from work by Douglas \cite{douglas2002dirichlet}  in string theory. Roughly speaking, a stability condition $\sigma$ on a triangulated category $\CT$ consists of data $(Z,\CP)$ where $Z$ is called the \emph{central charge}, and $\CP$ is a slicing on $\CT$. There are many examples and conjectures about the constructions of the stability conditions, we refer to surveys \cite{Bridgeland2009,macri2021,macri2017} for more information and applications.

Based on \cite{bondal1989representation,bondal1989representable,bernstein1982faisceaux}, Macr\`i \cite{macri2007stability,macri2004} gave a procedure generating the stability conditions on triangulated categories generated by an exceptional collection which was developed further by Collins and Polishchuk \cite{collins2010}. To a fixed complete exceptional collection $\CE=\hkh{E_0,\cdots,E_n}$ on a triangulated category $\CT$ , Macr\`i \cite{macri2007stability,macri2004}  obtained an subspace  of stability conditions, denoted by $\Theta_{\CE}$, generated by $\CE$. 

Motivated by the work of Macr\`i \cite{macri2007stability,macri2004}, Dimitrov and Katzarkov \cite{DimitrovGeorge2016,DimitrovGeorge2016imrn} introduced the notation $\sigma$-\emph{exceptional collection} and reformulated Macr\`i's construction of $\Theta_{\CE}$. For length 3 complete exceptional collection $\CE=\hkh{E_0,E_1,E_2}$, Dimitrov and Katzarkov \cite{DimitrovGeorge2016} gave an exact description of $\Theta_{\CE}$. For longer length cases, it is difficult to give an exact description of $\Theta_{\CE}$ following the approach in \cite{DimitrovGeorge2016}.  As explained in \cite[Remark 2.5]{DimitrovGeorge2016}, the main difficulty is that the similarly equality in \cite[Lemma 2.4]{DimitrovGeorge2016}  does not hold for $n\geq 3$. 

In this paper we give a description of $\Theta_{\CE}$ for length 4 exceptional collection $\CE$ as follows

\begin{thm}\label{thmmain}
Let $\CT$ be a finite linear triangulated category over $\BC$. Let $\CE=\hkh{E_0,E_1,E_2,E_3}$ be a complete exceptional collection, denoted by 
\[
k_{ij}=\min\hkh{k:\Hom^{k}(E_i,E_j)\neq 0}\in\BZ.
\]
\begin{equation*}
\begin{split}
\CK_{02}=\min\hkh{k_{02},k_{01}+k_{12}-1}, \quad \CK_{13}=\min\hkh{k_{13},k_{12}+k_{23}-1},\\
\CK_{03}=\min\hkh{k_{01}+k_{12}+k_{23}-2, k_{01}+k_{13}-1,k_{02}+k_{23}-1,k_{03}},
\end{split}
\end{equation*}
and
\[
\Theta_1=\hkh{
(y_0,y_1,y_2,y_3)\in\BR^4\Bigg|\begin{array}{cl} 
&y_0-y_1<k_{01},\quad y_1-y_2<k_{12}\\
&y_2-y_3<k_{23}, \quad y_1-y_3<\CK_{13}\\
&y_0-y_3<\CK_{03},\quad y_0-y_2<\CK_{02}
\end{array} 
}
\]
Then $\Theta_{\CE}$ is homeomorphic with the set 
\[
\BR_{>0}^4\times\ykh{ \Theta_1\setminus \Delta}.
\]
where $\Delta$ is the union of following five sets

\begin{eqnarray*}
&\Delta^1=\hkh{(x_0,x_1,x_2,x_3)\in\Theta_1\Bigg|
\begin{array}{cll} 
&\CR_{23}^1,\quad\CR_{13}^1\\
&\xqz{x_0-x_1}=k_{01}-1,\xqz{x_0-x_2}< \CK_{02}-1, \\
&\xqz{x_2-x_3}=k_{23}-1,x_1-x_2=\xqz{x_1-x_2} 
\end{array} 
}\\
&\Delta^2=\hkh{(x_0,x_1,x_2,x_3)\in\Theta_1\Bigg|
\begin{array}{cll} 
&\CR_{23}^1,\quad\CR_{13}^0,\quad \xqz{x_0-x_2}< \CK_{02}-1, \\
&\xqz{x_2-x_3}=k_{23}-1,x_1-x_2=\xqz{x_1-x_2} 
\end{array} 
}\\
&\Delta^3=\hkh{(x_0,x_1,x_2,x_3)\in\Theta_1\Bigg|
\begin{array}{clll} 
&\CR_{12}^1,\quad\CR_{13}^0, \quad \xqz{x_0-x_1}<k_{01}-1, \\
&\xqz{x_1-x_2}=k_{12}-1,x_2-x_3=\xqz{x_2-x_3} 
\end{array} 
}\\
&\Delta^4=\hkh{(x_0,x_1,x_2,x_3)\in\Theta_1\Bigg|
\begin{array}{clll} 
&\CR_{12}^1,\quad\CR_{13}^0, \quad \xqz{x_0-x_1}<k_{01}-1, \\
&\xqz{x_1-x_2}=k_{12}-1,\xqz{x_0-x_3} =\CK_{03}-1
\end{array} 
}\\
&\Delta^5=\hkh{(x_0,x_1,x_2,x_3)\in\Theta_1\Bigg|
\begin{array}{clll} 
&\CR_{12}^0,\quad\CR_{13}^1,\quad\xqz{x_1-x_3}=\CK_{13}-1,\\
&\xqz{x_0-x_1}<k_{01}-1,\xqz{x_0-x_2}= \CK_{02}-1
\end{array} 
}
\end{eqnarray*}
where $\CR_{ij}^{\alpha},\alpha\in\hkh{0,1}, 1\leq i,j\leq 3$ are the relations defined by $\eqref{relationR}$.
\end{thm}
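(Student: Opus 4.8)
The plan is to run the strategy of Macr\`i and Dimitrov--Katzarkov. Every $\sigma\in\Theta_{\CE}$ carries a $\sigma$-exceptional collection obtained from $\CE$ by shifting (and, on special loci, mutating), and $\sigma$ is reconstructed from the four numbers $Z(E_0),\dots,Z(E_3)$ alone; writing $Z(E_i)=m_i\exp(\sqrt{-1}\pi\gamma_i)$ with $m_i>0$, the assignment $\sigma\mapsto\bigl((m_i)_i,(\gamma_i)_i\bigr)$ is a homeomorphism of $\Theta_{\CE}$ onto its image, the $m_i$ range freely over $\BR_{>0}^4$, and (after identifying the $\gamma_i$ with the $y_i$ of the statement in the convention of \cite{DimitrovGeorge2016}) the theorem reduces to the set-theoretic identity $P=\Theta_1\setminus\Delta$, where $P\subseteq\BR^4$ is the set of phase vectors that actually occur. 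So it suffices to prove the two inclusions for $P$ and to check that the bijection and its inverse are continuous; the latter is formal once the integer shift vector $(a_0,\dots,a_3)$ attached to $\sigma$ is seen to be locally constant away from the walls.

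For $P\subseteq\Theta_1$ (necessity of the inequalities): in any $\sigma\in\Theta_{\CE}$ the objects $E_0,\dots,E_3$ are $\sigma$-stable, so the vanishing principle ``$\Hom(A,B)=0$ whenever $A,B$ are semistable with $\phi(A)>\phi(B)$'' applies to them and to objects built from them. Paired against the nonzero classes in $\Hom^{k_{i,i+1}}(E_i,E_{i+1})$ it gives the three consecutive inequalities, and these are strict because two non-isomorphic stable objects of the same phase admit no nonzero maps between them. The composite bounds $\CK_{02},\CK_{13},\CK_{03}$ require auxiliary objects: iterated cones of the minimal-degree morphisms between the $E_i$ (partial mutations), which again lie in the heart generated by $\CE$ and whose phases are pinched between those of the $E_i$; chasing the defining triangles and the induced long exact $\Hom$-sequences produces the ``$-1$''s in $k_{01}+k_{12}-1$ (the shift is the degree of a connecting morphism) and, similarly, the three competing terms in $\CK_{03}$. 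That $P$ in fact avoids $\Delta$ is proved together with the reverse inclusion.

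For $\Theta_1\setminus\Delta\subseteq P$ (construction): given $(y_0,\dots,y_3)\in\Theta_1\setminus\Delta$, I would exhibit a heart $\CA$ and a central charge realising it. The candidate is the extension closure of a shifted collection $\bigl(E_0[a_0],E_1[a_1],E_2[a_2],E_3[a_3]\bigr)$ --- or, on the strata where such a shift fails to be Ext-exceptional, of an explicit mutation of it dictated by which of the competing quantities defining $\CK_{02},\CK_{13},\CK_{03}$ is binding --- with the integers $a_i$ read off from $\lfloor y_i\rfloor$ exactly as in the length-3 case. By Macr\`i's lemma $\CA$ is then the heart of a bounded $t$-structure once one verifies $\Hom^{\le 0}(E_i[a_i],E_j[a_j])=0$ for $i<j$, and assigning $Z(E_i)$ the prescribed modulus and phase produces a $\sigma\in\Theta_{\CE}$ with the required phase vector. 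The real content is the $\Hom^{\le 0}$-vanishing for the non-consecutive pairs: $\Hom^k(E_0,E_3)$ can be nonzero for $k$ below all of $k_{03}$, $k_{01}+k_{13}-1$, and $k_{02}+k_{23}-1$ only through higher (Massey-type) compositions, so --- exactly as flagged in \cite[Remark 2.5]{DimitrovGeorge2016} --- it cannot be read off any one pair, and one must run an iterated-cone / spectral-sequence computation. It is precisely on the loci $\Delta^1,\dots,\Delta^5$ --- where a floor function attains an extremal value, a phase difference becomes an integer, and the relation $\CR_{ij}^{\alpha}$ of \eqref{relationR} records whether the relevant composition vanishes --- that the candidate heart degenerates, a simple object decomposes, and a mutation of $\CE$ rather than $\CE$ itself becomes the $\sigma$-exceptional collection; hence no $\sigma\in\Theta_{\CE}$ has such a phase vector, which also finishes $P\cap\Delta=\emptyset$.

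The main obstacle is this last case analysis. The clean length-3 description rests on \cite[Lemma 2.4]{DimitrovGeorge2016}, whose analogue genuinely fails for a length-4 collection, so one has to enumerate the finitely many ``binding-constraint patterns'' for the triples $(0,1,2),(1,2,3),(0,1,3),(0,2,3)$ and the quadruple $(0,1,2,3)$, decide which mutation of $\CE$ to use on each resulting stratum, and verify that the thin loci on which two patterns disagree or a heart collapses assemble exactly into $\Delta^1\cup\dots\cup\Delta^5$ --- with the relations $\CR_{ij}^{\alpha}$ supplying the non-formal input that the numbers $k_{ij}$ alone do not detect. Everything else --- bijectivity, and continuity of $\sigma\mapsto((m_i),(y_i))$ and of its inverse on the complement of the walls, where the formula $Z(E_i)=m_ie^{\sqrt{-1}\pi y_i}$ and the shifts $a_i$ are locally constant --- then goes through as in the length-3 treatment.
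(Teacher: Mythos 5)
Your proposal is a plan rather than a proof, and the part it defers is exactly the mathematical content of the theorem. The paper does not re-derive the stability-theoretic identification at all: it quotes the Dimitrov--Katzarkov homeomorphism \eqref{homeo}, which reduces Theorem \ref{thmmain} to a purely combinatorial statement, namely that
\[
\bigcup_{\textbf{p}\in A_0}\ykh{S^3(-1,+1)-\textbf{p}}=\Theta_1\setminus\Delta ,
\]
i.e.\ for every $(x_0,x_1,x_2,x_3)\in\Theta_1\setminus\Delta$ one can choose integers $p_i\in\hkh{\xqz{x_0-x_i},\sqz{x_0-x_i}}$ with $(0,p_1,p_2,p_3)\in A_0$ and $x_i+p_i\in S^3(-1,+1)$, and that no such choice exists precisely on $\Delta=\Delta^1\cup\cdots\cup\Delta^5$. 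You acknowledge this step (``the main obstacle is this last case analysis'') but do not carry it out: there is no enumeration of the finitely many choices of $p_i$, no criterion replacing \cite[Lemma 2.4]{DimitrovGeorge2016}, and no verification that the failure locus is exactly the five sets $\Delta^i$ (in particular that it is no larger and no smaller). In the paper this is the whole of Section \ref{mainproof}: the relations $\CR_{ij}^{\alpha}$ of \eqref{relationR}, the floor-function lemmas (Lemma \ref{pair1}, Lemma \ref{triple1}, Lemma \ref{triple3}, Corollary \ref{triple2}, Corollary \ref{pijcor}) and the four case lemmas (Lemma \ref{lemmacase1}--\ref{lemmacase4}). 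Without an analogue of that analysis your argument establishes neither inclusion, so the proof is incomplete at its core.

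Two further points of the plan are misdirected. First, your fallback of realizing phase vectors on degenerate strata by a \emph{mutation} of $\CE$ is inconsistent with the set you must describe: by \eqref{thetaall}, $\Theta_{\CE}$ is the union of $\Theta_{\CE[\textbf{p}]}^{\prime}$ over Ext shifts $\textbf{p}\in A_0$ of $\CE$ itself, so a stability condition built from a mutated collection need not lie in $\Theta_{\CE}$ and cannot be used for the inclusion $\Theta_1\setminus\Delta\subseteq P$; conversely, once \eqref{homeo} is granted, excluding $\Delta$ requires no argument about mutations or decomposing simple objects, only the arithmetic fact that no admissible $\textbf{p}$ exists there. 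Second, the necessity of the composite bounds $\CK_{02},\CK_{13},\CK_{03}$ does not need auxiliary cones, long exact sequences or Massey-type compositions: for $\textbf{p}\in A_0$ the pairwise Ext conditions give $p_1\le k_{01}-1$, $p_2-p_1\le k_{12}-1$, $p_3-p_2\le k_{23}-1$, etc., and adding these (together with $-1<z_i-z_j<1$) already yields $x_0-x_2<\CK_{02}$, $x_1-x_3<\CK_{13}$, $x_0-x_3<\CK_{03}$; the homological input you invoke is both unproved and unnecessary here.
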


{\textbf{Proof idea.}} Our method is different from \cite{DimitrovGeorge2016}. We refer to Section \ref{mainproof} for complete notations. Our proof based on two ideas.  First, we can see that $x_0=y_0$, then we need to $z_i\in (x_0-1,x_0+1)$, then the possible choices of $p_i$ are $\xqz{x_0-x_i}$ or $\sqz{x_0-x_i}$, then we reduce to check conditions $(z_0,z_1,z_2,z_3)\in S^3(-1,1)$ and $(0,p_1,p_2,p_3)\in A_0$, secondly, our main technical observation is   the properties  of floor function $\xqz{x}$ (See Lemma \ref{pair1}, Lemma \ref{triple1}, Lemma \ref{triple3}, Corollary \ref{triple2} and Corollary \ref{pijcor}) which can classify $\Theta_1$ into different cases, and check them case by case (See Lemma \ref{lemmacase1}-\ref{lemmacase4}). This classification can give all elements of $\Delta$ and avoid the more technical arguments of \cite[Lemma 2.4]{DimitrovGeorge2016}.


The paper is structured as follows. In the next section we review the basic notations and constructions. In Section \ref{mainproof} we give the complete  proof. In Section \ref{example} we give the examples and some remarks of author's original motivation.


\section{Review: basic definitions and constructions}

In this section we review the basic notations and facts concerning the construction of the stability conditions following Macr\`i  \cite{macri2007stability,macri2004}, Dimitrov--Katzarkov \cite{DimitrovGeorge2016,DimitrovGeorge2016imrn} and Bridgeland \cite{Bridgeland2007}.  For details the reader is referred to the original papers \cite{macri2007stability,macri2004,DimitrovGeorge2016,DimitrovGeorge2016imrn,Bridgeland2007,bondal1989representation}.

Let $\CT$ be a linear triangulated category and of finite type over $\BC$. We denoted by $\jkh{E}$ the extension closed subcategory of $\CT$ generated by subcategory $E$ of $\CT$ and $K(\CT)$ the Grothendieck group of $\CT$. Similarly, the Grothendieck group of an abelian category $\CA$ is denoted $K(\CA)$.

\subsection{Stability conditions}

\begin{defn}[\cite{Bridgeland2007}]
A \emph{stability condition} $\sigma=(Z,\CP)$ on a triangulated category $\CT$ consists of a group homomorphism $Z:K(\CT)\to \BC$, and full additive subcategories $\CP(\phi)\subset \CT$ for each $\phi\in\BR$, satisfying the following axioms:
\begin{enumerate}[(a)]
\item if $E\in\CP(\phi)$ then $Z(E)=m(E)\exp(i\pi\phi)$ for some $m(E)\in\BR_{>0}$,

\item for all $\phi\in\BR$, $\CP(\phi+1)=\CP(\phi)[1]$,

\item if $\phi_1>\phi_2$ and $A_j\in\CP(\phi_j)$ then $\Hom_{\CT}(A_1,A_2)=0$,
 
\item for each nonzero object $E\in\CT$ there are a finite sequence of real numbers 
\[
\phi_1 > \phi_2 > \cdots>\phi_n
\]
and a collection of triangles
\[
\xymatrix@C=.5em{
0_{\ } \ar@{=}[r] & E_0 \ar[rrrr] &&&& E_1 \ar[rrrr] \ar[dll] &&&& E_2
\ar[rr] \ar[dll] && \ldots \ar[rr] && E_{n-1}
\ar[rrrr] &&&& E_n\ar[dll] \ar@{=}[r] &  E_{\ } \\
&&& A_1 \ar@{-->}[ull] &&&& A_2 \ar@{-->}[ull] &&&&&&&& A_n \ar@{-->}[ull] 
}
\]
with $A_j\in \CP(\phi_j)$ for all $j$.
\end{enumerate}
\end{defn}

Nonzero objects in $\CP(\phi)$ are said to be \emph{semistable} of phase $\phi$. For interval $I\subseteq\BR$, we denote $\CP(I)$ the extension closed subcategory of $\CT$ generated by the subcategories $\CP(\phi),\phi\in I$. If there exists some $\varepsilon>0$ such that each $\CP((\phi-\varepsilon,\phi+\varepsilon))$ is of finite length, then we call the stability condition is \emph{locally finite}.  

By \cite[Theorem 1.2, Corollary 1.3]{Bridgeland2007}, the space of locally finite stability conditions, denoted by $\Stab(\CT)$, is a complex manifold. By \cite[Lemma 8.2]{Bridgeland2007},    $\Stab(\CT)$ carries a right action of the group $\widetilde{\GL}^+(2,\BR)$, the universal covering space of $\GL^+(2,\BR)$, and a left action by isometries of the group of exact autoequivalences of $\CT$.

For an abelian category $\CA$, Bridgeland \cite[Definition 2.1]{Bridgeland2007} defined the \emph{stability function} $Z: K(\CA)\to \BC$, and \cite[Definition 2.2]{Bridgeland2007} introduced  the \emph{Harder-Narasimhan property} for the stability function.  Then Bridgeland \cite[Proposition 5.3]{Bridgeland2007} showed that to give a  stability condition on a triangulated category $\CT$ is equivalent to giving a bounded $t$-structure on $\CT$ and a stability function on its heart with the  Harder-Narasimhan property.

\subsection{Exceptional objects}
The theory of exceptional collections  and helix theory developed in the Rudakov seminar \cite{rudakov1990helices}. In order to get a heart of a bounded $t$-structure on $\CT$ generated by the exceptional collection $\CE$, Macr\`i  \cite{macri2007stability,macri2004} introducted the notation \emph{Ext} exceptional collection. 

Now, we recall the basic definitions. Following \cite{bondal1989representation} we denote
\[
\Hom^{\bullet}(A,B)=\bigoplus_{k\in\BZ}\Hom^k(A,B)[-k],
\]
where $A,B\in\CT,\Hom^k(A,B)=\Hom(A,B[k])$. An object $E\in\CT$ is called \emph{exceptional} if it satisfies 
\begin{equation*}\Hom^i(E,E)=
\begin{cases}
\BC & \text{ if } i=0,\\
0 & \text{ otherwise. }
\end{cases}
\end{equation*}
An ordered collection of exceptional objects $\{E_0,\cdots,E_n\}$ is called \emph{exceptional} in $\CT$ if it satisfies
\[
\Hom^{\bullet}(E_i,E_j)=0,\text{ for } i>j.
\]
We call $\{E_0,\cdots,E_n\}$  the \emph{length}  $n+1$ exceptional collection, and we call an exceptional collection of two objects an \emph{exceptional pair}.

Let $(E,F)$ an exceptional pair. We define objects $\CL_E F$ and $\CR_F E$ (which we call \emph{left mutation} and  \emph{right mutation} respectively) by the distinguished triangles
\begin{eqnarray*}
\CL_E F\longrightarrow \Hom^{\bullet}(E,F)\otimes E\longrightarrow F,\\
E\longrightarrow \Hom^{\bullet}(E,F)^*\otimes F\longrightarrow \CR_F E,
\end{eqnarray*}
where $V[k]\otimes E$ denotes an object isomorphic to the direct sum of $\dim V$ copies of the object $E[k]$. A \emph{mutation} of an exceptional collection $\CE=\{E_0,\cdots,E_n\}$ is defined as a mutation of a pair of adjacent objects in this collection:
\[
\CR_i\CE=\{E_0,\cdots,E_{i-1},E_{i+1},\CR_{E_{i+1}}E_i, E_{i+2},\cdots,E_n\},
\]
\[
\CL_i\CE=\{E_0,\cdots,E_{i-1},\CL_{E_{i}}E_{i+1}, E_{i}, E_{i+2},\cdots,E_n\},
\]
for $i=0,\cdots,n-1$.

\begin{defn}[\cite{macri2007stability}]
Let $\CE=\{E_0,\cdots,E_n\}$ be an exceptional collection. We call $\CE$
\begin{itemize}
\item strong, if $\Hom^k(E_i,E_j)=0$ for all $i$ and $j$, with $k\neq 0$;
\item Ext, if $\Hom^{\leq 0}(E_i,E_j)=0$ for all $i\neq j$;
\item complete, if $\CE$ generates $\CT$ by shifts and extensions.
\end{itemize}
\end{defn}

\subsection{Basic construction}

Let $\CE=\hkh{E_0,\cdots,E_n}$ be a complete exceptional collection on $\CT$.   For $\textbf{p}=(p_0,\cdots,p_n)\in\BZ^{n+1}$, we denote by $\CE[\textbf{p}]:=\hkh{E_0[p_0],\cdots,E_n[p_n]}$ the shift of the exceptional collection $\CE$ and  $\CA_{\CE[\textbf{p}]}=\jkh{E_0[p_0],\cdots,E_n[p_n]}$ the extension closed subcategory generated by $\CE[\textbf{p}]$. We write 
\begin{equation}\label{A0def}
A_0=\hkh{\textbf{p}: \textbf{p}=(p_0,\cdots,p_n)\in\BZ^{n+1} \text{ such that } \CE[\textbf{p}] \text{ is Ext}}.
\end{equation}

From now, we assume $\textbf{p}\in A_0$. By \cite[Lemma 3.14]{macri2007stability} , $\CA_{\CE[\textbf{p}]}$ is the heart of a bounded $t$-structure on $\CT$. Fix $z_0,\cdots,z_n\in \BH:=\hkh{z\in\BC: z=|z|\exp(i\pi\phi),0<\phi\leq 1}$.  Define a stability function $Z_{\textbf{p}}: K_0(\CA_{\CE[\textbf{p}]})\to \BC$ by
\[
Z_{\textbf{p}}(E_i[p_i])=z_i, \text{ for all } i.
\]
By \cite[Lemma 3.16]{macri2007stability}, $\CA_{\CE[\textbf{p}]}$ is an abelian category of finite length and with finitely many simple objects, then by \cite[Proposition 2.4]{Bridgeland2007} any stability function $Z_{\textbf{p}}$ on $\CA_{\CE[\textbf{p}]}$ satisfies the Harder--Narasimhan property. By \cite[Proposition 5.3]{Bridgeland2007}, this extends to a unique stability condition on $\CT$ which is locally finite.

Following \cite[Defintion 3.13]{DimitrovGeorge2016imrn}, we denote by $\BH^{\CE[\textbf{p}]}$ the set of stability conditions on $\CT$ constructued above, denote $\Theta_{\CE[\textbf{p}]}^{\prime}=\BH^{\CE[\textbf{p}]}\cdot\widetilde{\GL}^+(2,\BR)$,  and 
\begin{equation}
\Theta_{\CE}=\bigcup_{\textbf{p}\in A_0} \Theta_{\CE[\textbf{p}]}^{\prime}\label{thetaall}.
\end{equation}

For the convenience, Dimitrov--Katzarkov \cite[Definition 2.3]{DimitrovGeorge2016} introduced the notation $S^n(-1,1)$, which defined by
\[
S^n(-1,+1)=\hkh{(y_0,\cdots,y_n)\in\BR^{n+1}: -1<y_i-y_j<1,i<j}\subset\BR^{n+1}.
\]

By the comment in Dimitrov--Katzarkov \cite[p. 833-834]{DimitrovGeorge2016} and  \cite[formula (18)]{DimitrovGeorge2016}, there exists a homeomorphism which we denote by $f_{\CE}$:
\begin{eqnarray}\label{homeo}
f_{\CE}:\Theta_{\CE}\longrightarrow \BR_{>0}^{n+1}\times \ykh{\bigcup_{\textbf{p}\in A_0}S^n(-1,+1)-\textbf{p}}.
\end{eqnarray}

\begin{remark}
Dimitrov--Katzarkov \cite[p. 833-834]{DimitrovGeorge2016}  defined the homeomorphism $f_{\CE}$ by the restriction. The exact definition of $f_{\CE}$ is not necessary for our proof. For the details of $f_{\CE}$ the reader is referred to the original paper \cite{DimitrovGeorge2016}. 
\end{remark}

For $n=2$ case,  Dimitrov--Katzarkov \cite{DimitrovGeorge2016} gave an exact description of $\Theta_{\CE}$. In the next section we give a complete description of $\Theta_{\CE}$ for $n=3$ case using different menthod.


\section{Main Theorem}\label{mainproof}
In this section we prove the main theorem. For $x\in\BR$, let us denote \emph{floor function} by 
\[
\xqz{x}=\max\hkh{n\in \BZ: n\leq x},
\]
\emph{ceiling function} by $\sqz{x}=\xqz{x}+1$ and  \emph{fractional part} of $x$ by $\hkh{x}=x-\xqz{x}$.
It is easy to check that for every $x$ and $y$, the following inequality holds
\begin{equation}\label{xqz}
\xqz{x}+\xqz{y}\leq \xqz{x+y}\leq \xqz{x}+\xqz{y}+1.
\end{equation}

For $(x_0,x_1,x_2,x_3)\in\BR^4, 1\leq i<j\leq 3$ and $\alpha\in\hkh{0,1}$, by the inequality \eqref{xqz}, we can define $\CR_{ij}^{\alpha}$ as the relation
\begin{equation}\label{relationR}
\xqz{x_0-x_j }-\xqz{x_0-x_i }=\xqz{x_i-x_j }+\alpha.
\end{equation}

By the inequality \eqref{xqz},  the following lemma holds.

\begin{lemma}\label{pair1}
For fixed $i, j$, $\CR_{ij}^0$ and $\CR_{ij}^1$ do not hold at the same time. 
\end{lemma}

Moreover, for triple relations  $\CR_{12}^{\alpha}$,  $\CR_{13}^{\beta}$ and $\CR_{23}^{\gamma}$,  the following lemma holds.
\begin{lemma}\label{triple1} 
Let $(x_0,x_1,x_2,x_3)\in\BR^4$,  $\alpha,\beta,\gamma\in\hkh{0,1}$. If relations $\CR_{12}^{\alpha}$,  $\CR_{13}^{\beta}$ and $\CR_{23}^{\gamma}$ hold at the same time. Then
\begin{equation}
\beta\leq\alpha+\gamma\leq \beta+1.\label{abcinequ}
\end{equation}
In particular, all possibly values of  triple $(\alpha,\beta,\gamma)$ are 
\[
(0,0,0),\quad (1,1,0),\quad (1,0,0),\quad (0,0,1), \quad (1,1,1),\quad (0,1,1).
\]
\end{lemma}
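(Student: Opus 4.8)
The plan is to work directly from the definition \eqref{relationR} of the three relations and eliminate the variables. Write $a = \xqz{x_0 - x_1}$, $b = \xqz{x_0 - x_2}$, $c = \xqz{x_0 - x_3}$, $u = \xqz{x_1 - x_2}$, $v = \xqz{x_1 - x_3}$, $w = \xqz{x_2 - x_3}$. Then the hypotheses $\CR_{12}^\alpha$, $\CR_{13}^\beta$, $\CR_{23}^\gamma$ read
\begin{equation*}
b - a = u + \alpha, \qquad c - a = v + \beta, \qquad c - b = w + \gamma.
\end{equation*}
Subtracting the first from the second gives $c - b = v - u + \beta - \alpha$, and comparing with the third yields the single scalar identity
\begin{equation*}
w + \gamma = v - u + \beta - \alpha, \quad\text{i.e.}\quad \alpha + \gamma - \beta = v - u - w.
\end{equation*}
So everything reduces to bounding $v - u - w = \xqz{x_1 - x_3} - \xqz{x_1 - x_2} - \xqz{x_2 - x_3}$.

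The key step is then just an application of the two-term inequality \eqref{xqz} with the decomposition $x_1 - x_3 = (x_1 - x_2) + (x_2 - x_3)$: it gives $u + w \le v \le u + w + 1$, hence $0 \le v - u - w \le 1$. Substituting back, $0 \le \alpha + \gamma - \beta \le 1$, which is exactly \eqref{abcinequ}. For the last sentence of the lemma, I would simply enumerate: since $\alpha,\beta,\gamma \in \{0,1\}$ and $\beta \le \alpha + \gamma \le \beta + 1$, the triple $(\alpha,\gamma)$ must have sum $\beta$ or $\beta+1$; running over $\beta = 0$ (forcing $\alpha+\gamma \in \{0,1\}$, giving $(0,0),(1,0),(0,1)$) and $\beta = 1$ (forcing $\alpha+\gamma \in \{1,2\}$, giving $(1,0),(0,1),(1,1)$) produces precisely the six listed triples $(0,0,0),(1,0,0),(0,0,1),(1,1,1),(1,1,0),(0,1,1)$, and rules out $(0,1,0)$, $(1,0,1)$.

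I do not anticipate a genuine obstacle here: the content is entirely the observation that the three relations are not independent — their "defect parameters" satisfy one linear constraint coming from additivity of the floor function along $x_1 \to x_2 \to x_3$. The only thing to be careful about is that the cancellation of the $x_i$'s leaves no residual error terms, which is why the $a,b,c$ bookkeeping above is worth writing out explicitly rather than arguing informally; once the identity $\alpha + \gamma - \beta = \xqz{x_1-x_3} - \xqz{x_1-x_2} - \xqz{x_2-x_3}$ is on the page, the rest is immediate from \eqref{xqz} and a finite check.
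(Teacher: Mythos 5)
Your proposal is correct and follows essentially the same route as the paper's proof: both combine the three defining equalities \eqref{relationR} to isolate $\alpha+\gamma-\beta$ as the floor-function defect $\xqz{x_1-x_3}-\xqz{x_1-x_2}-\xqz{x_2-x_3}$, then apply \eqref{xqz} and enumerate the admissible triples. Your explicit $a,b,c,u,v,w$ bookkeeping is only a notational repackaging of the same argument.
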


\begin{proof}
If relations $\CR_{12}^{\alpha}$,  $\CR_{13}^{\beta}$ and $\CR_{23}^{\gamma}$ hold at the same time, i.e.
\begin{align*}
&\xqz{x_0-x_2}-\xqz{x_0-x_1}=\xqz{x_1-x_2}+\alpha\\
&\xqz{x_0-x_3}-\xqz{x_0-x_1}=\xqz{x_1-x_3}+\beta\\
&\xqz{x_0-x_3}-\xqz{x_0-x_2}=\xqz{x_2-x_3}+\gamma
\end{align*}
then we have
\[
\xqz{x_1-x_3}+\beta=\xqz{x_0-x_3}-\xqz{x_0-x_1}=\xqz{x_2-x_3}+\xqz{x_1-x_2}+\alpha+\gamma
\]
By the inequality \eqref{xqz}, we have
\[
\xqz{x_2-x_3}+\xqz{x_1-x_2}\leq \xqz{x_1-x_3}\leq \xqz{x_2-x_3}+\xqz{x_1-x_2}+1
\]
Hence
\[
\beta\leq\alpha+\gamma\leq \beta+1.
\]
Then we can list all possibly values of $(\alpha,\beta,\gamma)$ as follows
\[
(0,0,0),\quad (1,1,0),\quad (1,0,0),\quad (0,0,1), \quad (1,1,1),\quad (0,1,1).
\]
\end{proof}

Then we have following corollary

\begin{coro}\label{triple2}
Relations $\CR_{12}^0$,  $\CR_{13}^1$ and $\CR_{23}^0$ do not hold at the some time. Relations  $\CR_{12}^1$,  $\CR_{13}^0$ and $\CR_{23}^1$ do not hold at the some time.
\end{coro}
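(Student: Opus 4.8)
The plan is to obtain both assertions directly from the inequality \eqref{abcinequ} of Lemma \ref{triple1}, arguing by contradiction. For the first statement, suppose $\CR_{12}^0$, $\CR_{13}^1$ and $\CR_{23}^0$ hold simultaneously; this is the case $(\alpha,\beta,\gamma)=(0,1,0)$ of Lemma \ref{triple1}, so \eqref{abcinequ} would give $1=\beta\leq\alpha+\gamma=0$, which is absurd. Hence these three relations cannot all hold. For the second statement, suppose instead that $\CR_{12}^1$, $\CR_{13}^0$ and $\CR_{23}^1$ hold simultaneously; this is the case $(\alpha,\beta,\gamma)=(1,0,1)$, and \eqref{abcinequ} would force $2=\alpha+\gamma\leq\beta+1=1$, again a contradiction.

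Equivalently --- and this is the form I would actually write down --- one notes that neither $(0,1,0)$ nor $(1,0,1)$ occurs in the list of six admissible triples $(\alpha,\beta,\gamma)$ enumerated at the end of Lemma \ref{triple1}; since that list exhausts all triples for which $\CR_{12}^{\alpha}$, $\CR_{13}^{\beta}$ and $\CR_{23}^{\gamma}$ can hold together, the two excluded triples correspond precisely to the two impossible configurations claimed.

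I do not expect any genuine obstacle: the corollary is an immediate bookkeeping consequence of Lemma \ref{triple1}. The only point that deserves a word of care is logical rather than computational --- Lemma \ref{triple1} is phrased as an implication whose hypothesis is the simultaneous validity of the three relations, so the proof must be run by contraposition, exhibiting the contradiction that each forbidden triple produces in \eqref{abcinequ}.
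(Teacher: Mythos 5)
Your proof is correct and follows the paper's own route: the corollary is deduced directly from Lemma \ref{triple1}, by observing that the triples $(0,1,0)$ and $(1,0,1)$ violate the inequality \eqref{abcinequ} (equivalently, are absent from the enumerated list of admissible triples). The contrapositive phrasing you note is exactly how the paper implicitly uses it, so there is nothing to add.
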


Before the main theorem, we recall more properties of $\CR_{ij}^{\alpha},\alpha\in\hkh{0,1}$.

\begin{lemma}\label{triple3}
Let $(x_0,x_1,x_2,x_3)\in\BR^4$. If $\xqz{x_i-x_j}= x_i-x_j$ hold, then $\CR_{ij}^{0}$ hold.
\end{lemma}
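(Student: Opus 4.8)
\textbf{Proof proposal for Lemma \ref{triple3}.}

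The plan is to unwind the definition of $\CR_{ij}^{0}$ directly. By \eqref{relationR}, the relation $\CR_{ij}^{0}$ asserts
\[
\xqz{x_0-x_j}-\xqz{x_0-x_i}=\xqz{x_i-x_j}.
\]
Under the hypothesis $\xqz{x_i-x_j}=x_i-x_j$, that is, $x_i-x_j\in\BZ$, this identity becomes $\xqz{x_0-x_j}-\xqz{x_0-x_i}=x_i-x_j$, which I would rewrite as $\xqz{x_0-x_j}=\xqz{(x_0-x_i)+(x_i-x_j)}$. So the whole statement reduces to the elementary fact that adding an integer commutes with the floor function: $\xqz{t+m}=\xqz{t}+m$ for $t\in\BR$, $m\in\BZ$.

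Concretely, first I would note that $x_i-x_j\in\BZ$ is equivalent to $\xqz{x_i-x_j}=x_i-x_j$ by definition of $\xqz{\cdot}$. Then I would apply the left inequality in \eqref{xqz} with $x\mapsto x_0-x_i$ and $y\mapsto x_i-x_j$ to get $\xqz{x_0-x_i}+\xqz{x_i-x_j}\leq\xqz{x_0-x_j}$, and separately apply it with $x\mapsto x_0-x_j$ and $y\mapsto x_j-x_i$ (noting $\xqz{x_j-x_i}=-(x_i-x_j)=-\xqz{x_i-x_j}$ since $x_i-x_j\in\BZ$) to get $\xqz{x_0-x_j}-\xqz{x_i-x_j}\leq\xqz{x_0-x_i}$, i.e. $\xqz{x_0-x_j}\leq\xqz{x_0-x_i}+\xqz{x_i-x_j}$. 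Combining the two inequalities yields the equality $\xqz{x_0-x_j}-\xqz{x_0-x_i}=\xqz{x_i-x_j}$, which is exactly $\CR_{ij}^{0}$.

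There is no real obstacle here; the only thing to be careful about is the sign bookkeeping when passing from $\xqz{x_i-x_j}$ to $\xqz{x_j-x_i}$, which is legitimate precisely because the hypothesis forces $x_i-x_j$ to be an integer (for a non-integer $t$ one has $\xqz{-t}=-\xqz{t}-1$, not $-\xqz{t}$). Alternatively, one could simply invoke $\xqz{t+m}=\xqz{t}+m$ for $m\in\BZ$ as a known property of the floor function and conclude in one line; I would include the short two-inequality argument above so that the lemma rests only on \eqref{xqz}, keeping the exposition self-contained.
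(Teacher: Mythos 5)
Your proof is correct. The paper itself argues through fractional parts: from $x_i-x_j\in\BZ$ it deduces $\hkh{x_i}=\hkh{x_j}$, hence $\hkh{x_0-x_i}=\hkh{x_0-x_j}$, and unwinding the definition of the fractional part gives $\xqz{x_0-x_j}-\xqz{x_0-x_i}=x_i-x_j=\xqz{x_i-x_j}$, which is $\CR_{ij}^{0}$. You instead reduce the statement to $\xqz{t+m}=\xqz{t}+m$ for $m\in\BZ$ and establish it by applying the left inequality of \eqref{xqz} twice, once to $(x_0-x_i)+(x_i-x_j)$ and once to $(x_0-x_j)+(x_j-x_i)$, using integrality of $x_i-x_j$ to justify $\xqz{x_j-x_i}=-\xqz{x_i-x_j}$; this sign step is exactly where a non-integer difference would break the argument, and you flag it correctly. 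The two routes prove the same elementary fact; yours has the mild advantage of resting only on \eqref{xqz}, which the paper has already stated, while the paper's fractional-part argument is a line shorter. Either is fully adequate.
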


\begin{proof}
If $\xqz{x_i-x_j}= x_i-x_j$ holds, then $\hkh{x_i}=\hkh{x_j}$, we also have  $\hkh{x_0-x_i}=\hkh{x_0-x_j}$. By the definition of $\hkh{x}$, we have 
\[
x_0-x_i-\xqz{x_0-x_i}=x_0-x_j-\xqz{x_0-x_j}
\]
then $\CR_{ij}^{0}$ hold.
\end{proof}

Hence we have
\begin{coro}\label{pijcor}
If $\CR_{ij}^0$ holds,
 then
\[
\ykh{x_i+\xqz{x_0-x_i}}-\ykh{x_j+\xqz{x_0-x_j}}=x_i-x_j-\xqz{x_i-x_j}\in[0,1).
\]
If $\CR_{ij}^1$ holds, then $\xqz{x_i-x_j}\neq x_i-x_j$, and
\[
\ykh{x_i+\xqz{x_0-x_i}}-\ykh{x_j+\xqz{x_0-x_j}}=x_i-x_j-\xqz{x_i-x_j}-1\in(-1,0).
\]
\end{coro}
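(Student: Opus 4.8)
The plan is to reduce both assertions to a single algebraic identity and then check the stated membership in the half-open or open interval. The starting point is to rewrite the target difference by grouping the $x$-terms and the floor-terms separately:
\[
\ykh{x_i+\xqz{x_0-x_i}}-\ykh{x_j+\xqz{x_0-x_j}} = (x_i-x_j) - \ykh{\xqz{x_0-x_j}-\xqz{x_0-x_i}}.
\]
The bracket on the right is precisely the left-hand side appearing in the defining relation \eqref{relationR}, so the entire computation hinges on substituting the hypothesis $\CR_{ij}^{\alpha}$.

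For the case when $\CR_{ij}^0$ holds, I would substitute $\xqz{x_0-x_j}-\xqz{x_0-x_i}=\xqz{x_i-x_j}$ directly, which collapses the difference to $(x_i-x_j)-\xqz{x_i-x_j}$. By the definition of the fractional part this equals $\hkh{x_i-x_j}$, which lies in $[0,1)$. This settles the first statement.

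For the case when $\CR_{ij}^1$ holds, the same substitution, now with the extra $+1$, yields $(x_i-x_j)-\xqz{x_i-x_j}-1=\hkh{x_i-x_j}-1$, which a priori lies in $[-1,0)$. To sharpen this to the open interval $(-1,0)$ I first need to exclude the value $\hkh{x_i-x_j}=0$, equivalently $\xqz{x_i-x_j}=x_i-x_j$. This is where the preceding results do the work: by contraposition, Lemma \ref{triple3} says that $\xqz{x_i-x_j}=x_i-x_j$ forces $\CR_{ij}^0$, while Lemma \ref{pair1} forbids $\CR_{ij}^0$ and $\CR_{ij}^1$ from holding simultaneously. Hence under $\CR_{ij}^1$ we must have $\xqz{x_i-x_j}\neq x_i-x_j$, so $\hkh{x_i-x_j}\in(0,1)$ and the difference lies in $(-1,0)$, as claimed.

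I do not expect a genuine obstacle here, since the computation is a one-line rearrangement in each case. The only point requiring care is the endpoint in the $\alpha=1$ case, and this is resolved entirely by invoking Lemma \ref{triple3} together with Lemma \ref{pair1}, rather than by any new estimate.
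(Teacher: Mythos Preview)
Your proof is correct and follows exactly the approach implicit in the paper. The paper states this result as a corollary with no written proof beyond the words ``Hence we have'' following Lemma~\ref{triple3}; your argument makes explicit precisely what the paper intends---substitute the defining relation~\eqref{relationR} into the rearranged difference, and for the $\alpha=1$ endpoint invoke Lemma~\ref{triple3} together with Lemma~\ref{pair1}.
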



Before the main theorem, we prove the following four lemmas which is useful in the proof of the main theorem.

\begin{lemma}\label{lemmacase1}
If $\xqz{ x_0-x_1}=k_{01}-1$ and $\xqz{ x_0-x_2}=\CK_{02}-1$, then for any $(x_0,x_1,x_2,x_3)\in\Theta_1$, there exists $(0,p_1,p_2,p_3)\in A_0$ and $z_i\in S^3(-1,+1)$ such that $x_0=z_0, x_i=z_i-p_i, i=1,2,3$.
\end{lemma}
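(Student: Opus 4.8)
\textbf{Proof proposal for Lemma \ref{lemmacase1}.}

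The plan is to take the given point $(x_0,x_1,x_2,x_3)\in\Theta_1$ and produce the required data explicitly: set $z_0=x_0$, and for $i=1,2,3$ choose the integer $p_i$ to be whichever of $\xqz{x_0-x_i}$ or $\sqz{x_0-x_i}$ makes $z_i:=x_i+p_i$ land in the interval $(x_0-1,x_0+1)$; the hypothesis $(x_0,x_1,x_2,x_3)\in\Theta_1$ (so in particular $y_0-y_i<\CK_{0i}\le $ the relevant bound, and the chain inequalities $y_0-y_1<k_{01}$, etc.) guarantees $x_0-x_i>-1$ up to an integer shift, so such a $p_i$ exists and is essentially forced. With $z_0$ fixed, the two conditions to verify are $(z_0,z_1,z_2,z_3)\in S^3(-1,+1)$ and $(0,p_1,p_2,p_3)\in A_0$, i.e. that $\CE[(0,p_1,p_2,p_3)]$ is Ext. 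The first step is therefore to translate the $\Theta_1$ inequalities through the substitution $x_i=z_i-p_i$ into statements purely about the $p_i$ and the fractional parts, using Corollary \ref{pijcor} and the relations $\CR_{ij}^\alpha$ to control the quantities $\bigl(x_i+\xqz{x_0-x_i}\bigr)-\bigl(x_j+\xqz{x_0-x_j}\bigr)$.

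First I would pin down the $p_i$ precisely. The normalization $\xqz{x_0-x_1}=k_{01}-1$ together with $y_0-y_1<k_{01}$ means $x_0-x_1\in[k_{01}-1,k_{01})$, and similarly $\xqz{x_0-x_2}=\CK_{02}-1$ with $x_0-x_2\in[\CK_{02}-1,\CK_{02})$; for the index $3$ I would use the remaining inequalities $y_2-y_3<k_{23}$, $y_1-y_3<\CK_{13}$, $y_0-y_3<\CK_{03}$. The point of the lemma's two hypotheses is that they fix $p_1=\xqz{x_0-x_1}$ and $p_2=\xqz{x_0-x_2}$ (or the ceilings — whichever convention makes $z_i$ land correctly), removing the ambiguity for those two coordinates; for $p_3$ I expect either choice to work, or the choice is dictated by one of the mixed inequalities. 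Once the $p_i$ are fixed, checking $(z_0,\dots,z_3)\in S^3(-1,+1)$ amounts to verifying $-1<z_i-z_j<1$ for all $i<j$; after substituting $z_i-z_j=(x_i-x_j)+(p_i-p_j)$ and using $p_i=\xqz{x_0-x_i}$, this becomes exactly the assertion that the differences in Corollary \ref{pijcor} lie in $(-1,1)$, which is where the relations $\CR_{ij}^\alpha$ enter: whether $\CR_{ij}^0$ or $\CR_{ij}^1$ holds decides the sign, and Lemma \ref{pair1}, Lemma \ref{triple1} and Corollary \ref{triple2} constrain which combinations can occur.

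The harder part is showing $(0,p_1,p_2,p_3)\in A_0$, i.e. that the shifted collection is Ext: one must check $\Hom^{\le 0}(E_i[p_i],E_j[p_j])=0$ for all $i\ne j$, equivalently $p_i-p_j< k_{ij}$ for $i<j$ (and automatically $0$ for $i>j$ since the collection is exceptional). For the adjacent pairs $(0,1),(1,2),(2,3)$ this should follow directly from the chain inequalities in $\Theta_1$ and the normalization of the $p_i$. For the non-adjacent pairs $(0,2),(1,3),(0,3)$ the naive bound $p_i-p_j<k_{ij}$ is not what $\Theta_1$ gives — instead $\Theta_1$ only bounds $x_0-x_j$ by $\CK_{0j}=\min\{\dots\}$, so one needs that $k_{ij}$ is controlled by the $\CK$'s together with the relations $\CR_{ij}^\alpha$; this is precisely the role of the definitions of $\CK_{02},\CK_{13},\CK_{03}$ as minima over composition-type expressions $k_{0i}+k_{ij}-1$. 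I anticipate this Ext-verification for the long pairs to be the main obstacle, and the way through it is to combine the $\CK$-inequalities from $\Theta_1$ with Lemma \ref{triple1}'s list of admissible $(\alpha,\beta,\gamma)$: each admissible triple forces a specific relationship among $p_0-p_2$, $p_1-p_3$, $p_0-p_3$ and the $k$'s, and one checks in each of the six cases that $p_i-p_j<k_{ij}$ holds. The hypotheses $\xqz{x_0-x_1}=k_{01}-1$ and $\xqz{x_0-x_2}=\CK_{02}-1$ are exactly the boundary normalizations that make the relevant inequality non-strict-to-strict bookkeeping come out right, and I would track carefully where strictness is used so as to match the open conditions defining $\Theta_1$.
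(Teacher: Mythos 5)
Your skeleton is the same as the paper's (take $z_0=x_0$, choose $p_i\in\hkh{\xqz{x_0-x_i},\sqz{x_0-x_i}}$, get $(z_0,\dots,z_3)\in S^3(-1,+1)$ from Corollary \ref{pijcor}, and check $(0,p_1,p_2,p_3)\in A_0$), but the decisive step --- actually verifying membership in $A_0$ --- is only announced, and where you indicate how it would go the mechanism is partly wrong. First, a sign slip: the Ext condition for $i<j$ is $p_j-p_i<k_{ij}$, not $p_i-p_j<k_{ij}$. More substantively, the pairs $(1,2)$, $(2,3)$, $(1,3)$ do \emph{not} ``follow directly from the chain inequalities in $\Theta_1$'': for instance $x_1-x_2<k_{12}$ only yields $p_2-p_1=\xqz{x_0-x_2}-\xqz{x_0-x_1}=\xqz{x_1-x_2}+\alpha\le k_{12}-1+\alpha$, which can fail when $\alpha=1$ and $\xqz{x_1-x_2}=k_{12}-1$; that failure mode is exactly what generates the sets $\Delta$ in the other cases, so it cannot be waved away. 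What makes Case I clean is that the hypotheses pin $p_1=\xqz{x_0-x_1}=k_{01}-1$ and $p_2=\xqz{x_0-x_2}=\CK_{02}-1$, while $x_0-x_3<\CK_{03}$ gives $p_3=\xqz{x_0-x_3}\le\CK_{03}-1$; then the minimum-definitions of the $\CK$'s do all the work: $p_2-p_1=\CK_{02}-k_{01}\le k_{12}-1$, $p_3-p_2\le\CK_{03}-\CK_{02}\le k_{23}-1$, and $p_3-p_1\le\CK_{03}-k_{01}\le\CK_{13}-1\le k_{13}-1$. Conversely, the ``long pairs'' $(0,2)$ and $(0,3)$, which you single out as the main obstacle, are immediate, since $\CK_{02}\le k_{02}$ and $\CK_{03}\le k_{03}$ give $p_2\le k_{02}-1$ and $p_3\le k_{03}-1$ directly from $\Theta_1$. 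Your proposed case analysis over the six admissible triples $(\alpha,\beta,\gamma)$ of Lemma \ref{triple1} is not needed here at all: with the uniform floor choice, Corollary \ref{pijcor} already places every difference $z_i-z_j$ in $(-1,1)$ whichever relation holds, and the $A_0$ check above is relation-free.

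A second concrete point: for $p_3$ it is not true that ``either choice works.'' Taking $p_3=\sqz{x_0-x_3}$ can give $p_3-p_2=\CK_{03}-\CK_{02}+1$, which may equal $k_{23}$ (this already happens in the quadric example of Section \ref{example}, where $\CK_{03}=\CK_{02}+k_{23}-1$), so the shifted collection would fail to be Ext. The floor must be taken for all three indices, $p_i=\xqz{x_0-x_i}$ for $i=1,2,3$, which is exactly the paper's choice. So the plan points in the right direction, but without the pinned values of $p_1,p_2$, the bound $p_3\le\CK_{03}-1$, and the three one-line inequalities among the $\CK$'s spelled out above, the proof is not yet there, and the route you sketch for the adjacent pairs would break down if followed literally.
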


\begin{proof}
If $\xqz{ x_0-x_1}=k_{01}-1$ and $\xqz{ x_0-x_2}=\CK_{02}-1$, then for any $(x_0,x_1,x_2,x_3)\in\Theta_1$ , we can set
\[
p_i=\xqz{ x_0-x_i}, z_i=x_i+p_i, i=1,2,3.
\]
Then we have
\begin{eqnarray*}
p_2-p_1=\CK_{02}-1-k_{01}+1\leq k_{12}-1, \\
p_3-p_1\leq \CK_{03}-1-k_{01}+1\leq \CK_{13}-1,\\
p_3-p_2 \leq \CK_{03}-1-\CK_{02}+1\leq k_{23}-1
\end{eqnarray*}
Hence $(0,p_1,p_2,p_3)\in A_0$. By the Corollary \ref{pijcor}, we have $z_i\in S^3(-1,+1)$.
\end{proof}

\begin{lemma}\label{lemmacase2}
Let 
\[
\Delta_{\uppercase\expandafter{\romannumeral2}}=\hkh{(x_0,x_1,x_2,x_3)\in\Theta_1\Bigg|
\begin{array}{cll} 
&\quad\CR_{23}^1,\\
&\xqz{x_0-x_1}=k_{01}-1,\xqz{x_0-x_2}< \CK_{02}-1, \\
&\xqz{x_2-x_3}=k_{23}-1,x_1-x_2=\xqz{x_1-x_2} 
\end{array} 
}.
\]
If $\xqz{ x_0-x_1}=k_{01}-1\text{ and } \xqz{ x_0-x_2}<\CK_{02}-1$, then for any $(x_0,x_1,x_2,x_3)\in\Theta_1\setminus\Delta_{\uppercase\expandafter{\romannumeral2}}$, there exists $(0,p_1,p_2,p_3)\in A_0$ and $z_i\in S^3(-1,+1)$ such that $x_0=z_0, x_i=z_i-p_i, i=1,2,3$.
\end{lemma}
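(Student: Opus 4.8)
\textbf{Proof proposal for Lemma \ref{lemmacase2}.}

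The plan is to mimic the construction in Lemma \ref{lemmacase1}: given $(x_0,x_1,x_2,x_3)\in\Theta_1$ with $\xqz{x_0-x_1}=k_{01}-1$ and $\xqz{x_0-x_2}<\CK_{02}-1$, set $p_i=\xqz{x_0-x_i}$ and $z_i=x_i+p_i$ for $i=1,2,3$, together with $p_0=0$, $z_0=x_0$. By Corollary \ref{pijcor} the condition $z_i\in S^3(-1,+1)$ follows automatically from the relations $\CR_{ij}^{\alpha}$ once we know which of $\CR_{ij}^0$, $\CR_{ij}^1$ holds for each pair, so the entire content is to verify $(0,p_1,p_2,p_3)\in A_0$, i.e. that the three gap inequalities $p_2-p_1\le k_{12}-1$, $p_3-p_1\le \CK_{13}-1$, $p_3-p_2\le k_{23}-1$ hold. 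First I would record the easy consequences of $\Theta_1$: from $x_0-x_1<k_{01}$ and $\xqz{x_0-x_1}=k_{01}-1$ we get $p_1=k_{01}-1$; from $x_0-x_2<\CK_{02}$ we always have $p_2\le \CK_{02}-1$, and the hypothesis sharpens this to $p_2\le \CK_{02}-2$; similarly $p_3\le \CK_{03}-1$ and $p_3\le \CK_{13}-1$ (using $x_0-x_3<\CK_{03}$, $x_1-x_3<\CK_{13}$ and Lemma \ref{triple1}/Corollary \ref{triple2} to compare $p_3-p_1$ with $\xqz{x_1-x_3}$).

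The inequality $p_3-p_1\le \CK_{13}-1$ should come for free from $p_1=k_{01}-1$ and $p_3\le\CK_{03}-1$ together with $\CK_{03}-k_{01}\le\CK_{13}$, which is immediate from the definitions $\CK_{03}\le k_{01}+k_{13}-1$ and $\CK_{13}\le k_{13}$; actually one gets $p_3-p_1\le\CK_{03}-k_{01}\le k_{13}-1$, and refining via $\CK_{03}\le k_{01}+\CK_{13}-1$ (which holds since $\CK_{03}\le k_{01}+k_{13}-1$ and $\CK_{03}\le k_{01}+k_{12}+k_{23}-2=k_{01}+(k_{12}+k_{23}-1)-1$, so $\CK_{03}-k_{01}\le\min\{k_{13},k_{12}+k_{23}-1\}-1+1=\CK_{13}$... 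I would double-check this arithmetic) gives the bound. The inequality $p_3-p_2\le k_{23}-1$ is the one where the excluded set $\Delta_{\mathrm{II}}$ enters. The danger case is precisely when $p_2$ is as large as allowed, $\xqz{x_2-x_3}$ is as large as allowed, and the relation $\CR_{23}$ adds $1$: if $\CR_{23}^1$ holds then $p_3-p_2=\xqz{x_2-x_3}+1$, and if moreover $\xqz{x_2-x_3}=k_{23}-1$ this equals $k_{23}$, violating the bound. So the strategy is: if $\CR_{23}^0$ holds, then $p_3-p_2=\xqz{x_2-x_3}\le k_{23}-1$ directly (using $x_2-x_3<k_{23}$); if $\CR_{23}^1$ holds, then $p_3-p_2=\xqz{x_2-x_3}+1$, and I must show $\xqz{x_2-x_3}\le k_{23}-2$ unless $(x_0,\ldots,x_3)$ lies in $\Delta_{\mathrm{II}}$ — and here is where the three defining conditions of $\Delta_{\mathrm{II}}$ (namely $\CR_{23}^1$, $\xqz{x_0-x_2}<\CK_{02}-1$, $\xqz{x_2-x_3}=k_{23}-1$, and the constraint $x_1-x_2=\xqz{x_1-x_2}$) must be shown to be forced. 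The point is that $\xqz{x_2-x_3}=k_{23}-1$ is exactly the bad value, so the only thing left to rule out is that $x_1-x_2=\xqz{x_1-x_2}$: I would argue that if $x_1-x_2\ne\xqz{x_1-x_2}$ then we can instead redefine $p_2$ by $p_2=\sqz{x_0-x_2}$ (the ceiling), which decreases the gap $p_3-p_2$ by one while keeping $z_2\in S^3(-1,1)$ and the other gaps valid — this is the mechanism that replaces the "similar equality" of \cite[Lemma 2.4]{DimitrovGeorge2016}.

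The main obstacle will be precisely this last dichotomy: showing that when $\CR_{23}^1$ and $\xqz{x_2-x_3}=k_{23}-1$ but $x_1-x_2\notin\BZ$, the alternative choice $p_2\mapsto\sqz{x_0-x_2}$ (or possibly an adjustment of $p_3$) still produces a point of $A_0$ with $z\in S^3(-1,+1)$. This requires checking all three gap inequalities for the modified tuple and invoking Corollary \ref{pijcor} again for the modified $z_2$; the hypothesis $\xqz{x_0-x_2}<\CK_{02}-1$ is what gives room to increase $p_2$ without breaking $p_2-p_1\le k_{12}-1$, and the hypothesis $x_1-x_2\ne\xqz{x_1-x_2}$ (equivalently $\CR_{12}^1$ cannot be in the degenerate boundary situation, via Lemma \ref{triple3}) is what keeps $z_2-z_1$ inside $(-1,1)$ after the shift. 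I expect the bookkeeping to fan out into a handful of sub-cases according to which of $\CR_{12}^{\alpha}$ and $\CR_{13}^{\beta}$ hold, constrained by Lemma \ref{triple1}, but each sub-case should reduce to an arithmetic inequality among the $k_{ij}$ and the $\CK$'s that follows from their defining minima.
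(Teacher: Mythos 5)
Your overall route is the same as the paper's: take $p_i=\xqz{x_0-x_i}$ by default, and in the critical situation $\CR_{23}^1$ with $\xqz{x_2-x_3}=k_{23}-1$ repair by setting $p_2=\sqz{x_0-x_2}$, using $\xqz{x_0-x_2}<\CK_{02}-1$ for the room in $A_0$; your bounds $p_2-p_1\le\CK_{02}-k_{01}\le k_{12}-1$ and $p_3-p_1\le\CK_{03}-k_{01}\le\CK_{13}-1$ are correct. The genuine gap is in the repair step. You claim that $x_1-x_2\ne\xqz{x_1-x_2}$ alone keeps $z_2-z_1$ inside $(-1,1)$ after raising $p_2$, glossing this as ``$\CR_{12}^1$ cannot be in the degenerate boundary situation, via Lemma \ref{triple3}''. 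But Lemma \ref{triple3} only gives the implication $x_1-x_2\in\BZ\Rightarrow\CR_{12}^0$; the relation $\CR_{12}^1$ is perfectly compatible with (indeed forces) $x_1-x_2\notin\BZ$. If $\CR_{12}^1$ holds, then $p_1$ is pinned at $\xqz{x_0-x_1}=k_{01}-1$ (the ceiling would be $k_{01}$, violating $p_1\le k_{01}-1$), and with $p_2=\sqz{x_0-x_2}$ one computes $z_2-z_1=(p_2-p_1)-(x_1-x_2)=\xqz{x_1-x_2}+2-(x_1-x_2)=2-\hkh{x_1-x_2}\ge 1$, so $z\notin S^3(-1,+1)$; keeping $p_2=\xqz{x_0-x_2}$ instead gives $p_3-p_2=\xqz{x_2-x_3}+1=k_{23}$, and raising $p_3$ only makes that worse. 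Hence the sub-case $\CR_{12}^1$, $\CR_{23}^1$, $\xqz{x_2-x_3}=k_{23}-1$ is neither handled by your construction nor contained in $\Delta_{\mathrm{II}}$ (whose points have $x_1-x_2\in\BZ$ and therefore satisfy $\CR_{12}^0$), so the dichotomy you announce does not close.

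Moreover this sub-case is not vacuous, so it cannot be argued away by the arithmetic of the $\CK$'s: with $k_{01}=k_{12}=k_{23}=10$ and $k_{02}=k_{13}=k_{03}=100$ (so $\CK_{02}=\CK_{13}=19$, $\CK_{03}=28$), the point $(x_0,x_1,x_2,x_3)=(0,-9.5,-15.2,-25.1)$ lies in $\Theta_1$, satisfies $\xqz{x_0-x_1}=9=k_{01}-1$, $\xqz{x_0-x_2}=15<\CK_{02}-1$, $\CR_{12}^1$, $\CR_{23}^1$, $\xqz{x_2-x_3}=9=k_{23}-1$ and $x_1-x_2\notin\BZ$; the only admissible tuple with $p_3-p_2\le k_{23}-1$ is $(0,9,16,25)$, which gives $z_2-z_1=1.3$. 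So on such points the proposed mechanism provably fails for every choice of $(0,p_1,p_2,p_3)$, and no amount of bookkeeping along your lines recovers the stated conclusion. Note that the paper's own proof is explicitly more restrictive at exactly this spot: its ceiling construction carries the hypothesis $\CR_{12}^0$ (together with $x_1-x_2\ne\xqz{x_1-x_2}$), and the remaining situation $\CR_{12}^1\wedge\CR_{23}^1$ is disposed of by a separate one-line assertion; a complete argument must confront that sub-case head-on rather than fold it into the non-integrality condition, as your sketch does. A smaller omission: Corollary \ref{pijcor} covers only the all-floor differences, so after replacing $p_2$ by $\sqz{x_0-x_2}$ you must also check $z_2-z_0<1$, which requires $x_0-x_2\notin\BZ$; this does follow from $\CR_{23}^1$ (if $x_0-x_2\in\BZ$ the floors over the indices $0,2,3$ would add exactly, forcing $\CR_{23}^0$), but it is an extra verification your sketch does not mention.
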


\begin{proof}
If $\xqz{ x_0-x_1}=k_{01}-1\text{ and } \xqz{ x_0-x_2}<\CK_{02}-1$,  by the Lemma \ref{pair1}, we can consider following cases

\begin{enumerate}
\item If the relation $\CR_{23}^0$ holds, then 
we can set
\[
p_i=\xqz{ x_0-x_i}, z_i=x_i+p_i, i=1,2,3
\]
Then we have
\begin{eqnarray*}
p_2-p_1\leq\CK_{02}-1-k_{01}+1\leq k_{12}-1, \\
p_3-p_1\leq \CK_{03}-1-k_{01}+1\leq \CK_{13}-1,\\
p_3-p_2=\xqz{x_0-x_3}-\xqz{x_0-x_2}=\xqz{x_2-x_3}\leq k_{23}-1
\end{eqnarray*}
Hence $(0,p_1,p_2,p_3)\in A_0$. By the Corollary \ref{pijcor}, we have $z_i\in S^3(-1,+1)$.

\item If the relation $\CR_{23}^1$ holds,

\begin{enumerate}[(i)]

\item\label{case221} If $\xqz{ x_2-x_3}\leq k_{23}-2$, we can set 
\[
p_i=\xqz{x_0-x_i}, z_i=x_i+p_i, i=1,2,3
\]
Then we have
\begin{eqnarray*}
p_2-p_1\leq\CK_{02}-1-k_{01}+1\leq k_{12}-1, \\
p_3-p_1\leq \CK_{03}-1-k_{01}+1\leq \CK_{13}-1,\\
p_3-p_2=\xqz{x_0-x_3}-\xqz{x_0-x_2}=\xqz{x_2-x_3}+1\leq k_{23}-1
\end{eqnarray*}
Hence $(0,p_1,p_2,p_3)\in A_0$. By the Corollary \ref{pijcor}, we have $z_i\in S^3(-1,+1)$.

\item If the relation $\CR_{12}^0$ and the condition $\xqz{ x_1-x_2}\neq x_1-x_2$  hold, 
we set
\[
p_1=\xqz{ x_0-x_1},  \quad p_2=\sqz{ x_0-x_2 } ,\quad p_3=\xqz{ x_0-x_3}, z_i=x_i+p_i, i=1,2,3
\]
Then we have
\begin{eqnarray*}
p_2-p_1\leq\CK_{02}-1-k_{01}+1\leq k_{12}-1, \\
p_3-p_1\leq \CK_{03}-1-k_{01}+1\leq \CK_{13}-1,\\
p_3-p_2=\xqz{x_0-x_3}-\xqz{x_0-x_2}=\xqz{x_2-x_3}+1\leq k_{23}-1
\end{eqnarray*}
Hence $(0,p_1,p_2,p_3)\in A_0$. By the Corollary \ref{pijcor}, we have $z_i\in S^3(-1,+1)$.
\end{enumerate}
\end{enumerate}
If $\CR_{23}^1$ and $\CR_{12}^1$ hold, then only case \ref{case221} is possible. Then we can write $\Delta_{\uppercase\expandafter{\romannumeral2}}$ by
\[
\Delta_{\uppercase\expandafter{\romannumeral2}}=\hkh{(x_0,x_1,x_2,x_3)\in\Theta_1\Bigg|
\begin{array}{cll} 
&\quad\CR_{23}^1,\\
&\xqz{x_0-x_1}=k_{01}-1,\xqz{x_0-x_2}< \CK_{02}-1, \\
&\xqz{x_2-x_3}=k_{23}-1,x_1-x_2=\xqz{x_1-x_2} 
\end{array} 
}
\]
\end{proof}

\begin{lemma}\label{lemmacase3}
Let $\Delta_{\uppercase\expandafter{\romannumeral3}}=\Delta_{\uppercase\expandafter{\romannumeral3}}^1\cup\Delta_{\uppercase\expandafter{\romannumeral3}}^2\cup\Delta_{\uppercase\expandafter{\romannumeral3}}^3$
where 
\begin{eqnarray*}
&\Delta_{\uppercase\expandafter{\romannumeral3}}^1=\hkh{(x_0,x_1,x_2,x_3)\in\Theta_1\Bigg|
\begin{array}{clll} 
&\quad\CR_{12}^1,\CR_{13}^0,\\
&\xqz{x_0-x_1}<k_{01}-1,\xqz{x_0-x_2}= \CK_{02}-1, \\
&\xqz{x_1-x_2}=k_{12}-1,x_2-x_3=\xqz{x_2-x_3} 
\end{array} 
}\\
&\Delta_{\uppercase\expandafter{\romannumeral3}}^2=\hkh{(x_0,x_1,x_2,x_3)\in\Theta_1\Bigg|
\begin{array}{clll} 
&\quad\CR_{12}^1,\CR_{13}^0,\\
&\xqz{x_0-x_1}<k_{01}-1,\xqz{x_0-x_2}= \CK_{02}-1, \\
&\xqz{x_1-x_2}=k_{12}-1,\xqz{x_0-x_3} =\CK_{03}-1
\end{array} 
}
\\
&\Delta_{\uppercase\expandafter{\romannumeral3}}^3=\hkh{(x_0,x_1,x_2,x_3)\in\Theta_1\Bigg|
\begin{array}{clll} 
&\quad\CR_{12}^0,\CR_{13}^1,\xqz{x_1-x_3}=\CK_{13}-1,\\
&\xqz{x_0-x_1}<k_{01}-1,\xqz{x_0-x_2}= \CK_{02}-1
\end{array}
}.
\end{eqnarray*}
If $\xqz {x_0-x_1}< k_{01}-1\text{ and } \xqz{x_0-x_2}=\CK_{02}-1$, then for any $(x_0,x_1,x_2,x_3)\in\Theta_1\setminus\Delta_{\uppercase\expandafter{\romannumeral3}}$, there exists $(0,p_1,p_2,p_3)\in A_0$ and $z_i\in S^3(-1,+1)$ such that $x_0=z_0, x_i=z_i-p_i, i=1,2,3$.
\end{lemma}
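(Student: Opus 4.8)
The plan is to mimic the structure of Lemma \ref{lemmacase2}, but now the active constraint at index $1$ is the strict inequality $\xqz{x_0-x_1}<k_{01}-1$ rather than equality, so the slack near $E_1$ must be exploited; the obstruction will migrate to the pair $(1,2)$ and the triple $(1,2,3)$. Throughout I fix the natural candidate $p_i=\xqz{x_0-x_i}$ or $p_i=\sqz{x_0-x_i}$, set $z_i=x_i+p_i$, and verify two things: that $(0,p_1,p_2,p_3)\in A_0$, which by \eqref{A0def} amounts to the inequalities $p_1-p_2\le k_{12}-1$ (wait: for $\CE[\mathbf p]$ to be Ext we need $\Hom^{\le 0}(E_i[p_i],E_j[p_j])=0$ for $i<j$, i.e. $p_j-p_i\le k_{ij}-1$, together with $p_i-p_j\le -1$... ) — more precisely the six inequalities $p_i-p_j\le k_{ij}-1$ replaced throughout by the compressed bounds $\CK_{02},\CK_{13},\CK_{03}$ exactly as they enter $\Theta_1$; and that $z_i\in S^3(-1,+1)$, which by Corollary \ref{pijcor} follows once each relevant relation $\CR_{ij}^{\alpha}$ is pinned down and each $p_i$ is chosen as floor or ceiling consistently with that $\alpha$.

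First I would split on the relation $\CR_{12}^\alpha$ using Lemma \ref{pair1}. In the case $\CR_{12}^0$ the choice $p_i=\xqz{x_0-x_i}$ gives $p_2-p_1=\xqz{x_1-x_2}\le k_{12}-1$ and $p_2-p_3,p_1-p_3$ bounded using $\xqz{x_0-x_2}=\CK_{02}-1$ together with the $\Theta_1$-inequalities $y_0-y_3<\CK_{03}$, $y_1-y_3<\CK_{13}$; the only way this can fail is if $\CR_{13}^1$ holds, forcing $z_3$ out of range, and then one must try $p_3=\sqz{x_0-x_3}$, which works unless simultaneously $\xqz{x_1-x_3}=\CK_{13}-1$ — this is precisely the obstruction set $\Delta_{\mathrm{III}}^3$. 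In the case $\CR_{12}^1$ the natural floor choice makes $p_2-p_1=\xqz{x_1-x_2}+1$, which is $\le k_{12}-1$ unless $\xqz{x_1-x_2}=k_{12}-1$; when that equality holds one instead sets $p_1=\sqz{x_0-x_1}$ (legitimate since $\xqz{x_0-x_1}<k_{01}-1$ leaves room, and $\CR_{12}^1$ guarantees $\xqz{x_1-x_2}\ne x_1-x_2$ so $z_1$ stays in $S^3$), and this repairs the $(1,2)$-bound; but it may break the $(0,3)$- or $(2,3)$-bound, and tracing through when exactly, using Corollary \ref{triple2} to rule out $(\alpha,\beta,\gamma)=(1,0,1)$, isolates the sets $\Delta_{\mathrm{III}}^1$ (where $\CR_{13}^0$ and $x_2-x_3=\xqz{x_2-x_3}$, i.e. $\CR_{23}^0$ by Lemma \ref{triple3}) and $\Delta_{\mathrm{III}}^2$ (where $\xqz{x_0-x_3}=\CK_{03}-1$ blocks raising $p_3$).

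The key bookkeeping device is the same two observations used earlier: Lemma \ref{triple1}, which restricts $(\alpha,\beta,\gamma)$ to six triples and hence limits how the three relations $\CR_{12},\CR_{13},\CR_{23}$ can co-occur, and Corollary \ref{pijcor}, which converts "$\CR_{ij}^\alpha$ plus the right floor/ceiling choice for $p_i,p_j$" into the strict inequality $-1<z_i-z_j<1$ needed for membership in $S^3(-1,+1)$. So the argument is a finite case analysis over the value of $(\alpha,\beta,\gamma)$ compatible with $\CR_{12}^\alpha$, and within each case over whether $\xqz{x_1-x_2}=k_{12}-1$ and whether $\xqz{x_0-x_3}=\CK_{03}-1$; in every branch except the three exceptional configurations one exhibits an explicit $\mathbf p$, and in those three branches one checks that \emph{no} choice of floors and ceilings works (there the would-be $p_3$ is squeezed between a lower bound forcing $p_3\ge\CK_{03}$ and the Ext-condition forcing $p_3-p_0\le\CK_{03}-1$), which identifies $\Delta_{\mathrm{III}}$ exactly.

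The hard part will be the last step — verifying that on $\Delta_{\mathrm{III}}^1,\Delta_{\mathrm{III}}^2,\Delta_{\mathrm{III}}^3$ there is genuinely no admissible $\mathbf p$, rather than merely that the obvious candidates fail. This requires arguing that since $z_i\in(x_0-1,x_0+1)$ forces $p_i\in\{\xqz{x_0-x_i},\sqz{x_0-x_i}\}$ (the reduction recorded in the paper's proof-idea paragraph), one has only finitely many — at most $2^3$ — candidate tuples, and then eliminating each by a short inequality chain that plays the compressed bound $\CK_{03}$ (which encodes $k_{01}+k_{12}+k_{23}-2$, etc.) against the three relations; the subtlety is that the failure is not of a single inequality but of the simultaneous system, so one must track the interplay of the $\CR^\alpha$ constraints with the boundary equalities $\xqz{x_0-x_2}=\CK_{02}-1$ and (in $\Delta_{\mathrm{III}}^2$) $\xqz{x_0-x_3}=\CK_{03}-1$ carefully. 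Everything else is the routine floor-function inequality manipulation already exemplified in Lemmas \ref{lemmacase1} and \ref{lemmacase2}.
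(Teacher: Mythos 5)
Your overall framework is the paper's: reduce to $p_i\in\{\xqz{x_0-x_i},\sqz{x_0-x_i}\}$, split into cases according to the relations $\CR_{ij}^{\alpha}$ using Lemma \ref{pair1}, Lemma \ref{triple1}, Lemma \ref{triple3} and Corollary \ref{pijcor}, exhibit an admissible $\mathbf p$ in every branch, and read off the three exceptional configurations; you also land on the correct sets $\Delta_{\mathrm{III}}^1,\Delta_{\mathrm{III}}^2,\Delta_{\mathrm{III}}^3$. However, the reasoning inside the $\CR_{12}^0$ branch is genuinely wrong as stated. First, the all-floor choice never puts any $z_i$ ``out of range'': by Corollary \ref{pijcor} the all-floor choice always satisfies the $S^3(-1,+1)$ windows, and under $\CR_{12}^0,\CR_{13}^1$ the only thing that can fail is the Ext inequality $p_3-p_1\le\CK_{13}-1$, and it fails precisely when $\xqz{x_1-x_3}=\CK_{13}-1$ (not whenever $\CR_{13}^1$ holds). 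Second, your proposed repair $p_3=\sqz{x_0-x_3}$ goes in the wrong direction: it increases $p_3-p_1$, so it can never restore $p_3-p_1\le\CK_{13}-1$, and moreover $\CR_{12}^0,\CR_{13}^1$ force $\CR_{23}^1$ (Lemma \ref{triple1}), so with $p_2$ pinned at its floor $\CK_{02}-1$ the choice $p_3=\sqz{x_0-x_3}$ pushes $z_2-z_3$ into $(-2,-1)$ and breaks the $(2,3)$ window. The actual dichotomy, which is what the paper's case (2) records, is: if $\xqz{x_1-x_3}\le\CK_{13}-2$ the floors already work, and if $\xqz{x_1-x_3}=\CK_{13}-1$ the only remaining candidate move is $p_1=\sqz{x_0-x_1}$, which is blocked by the $(1,2)$ window because $\CR_{12}^0$ holds and $p_2$ cannot be raised past $\CK_{02}-1$; this, not a failed $p_3$-ceiling, is why $\Delta_{\mathrm{III}}^3$ is exceptional. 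A similar looseness appears in your $\CR_{12}^1$ branch: raising $p_1$ does not ``break the $(0,3)$- or $(2,3)$-bound'' by itself — when $\CR_{13}^0$ holds it breaks the $(1,3)$ window of $S^3(-1,+1)$, forcing $p_3=\sqz{x_0-x_3}$ as well, and it is that forced move which needs $x_2-x_3\ne\xqz{x_2-x_3}$ and $\xqz{x_0-x_3}\le\CK_{03}-2$, producing $\Delta_{\mathrm{III}}^1$ and $\Delta_{\mathrm{III}}^2$. Throughout, you need to keep the two kinds of constraints separate: membership in $A_0$ (the six inequalities with $k_{01},k_{12},k_{23},\CK_{02},\CK_{13},\CK_{03}$) versus the strict windows defining $S^3(-1,+1)$.

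One more structural remark: the lemma as stated only requires the positive direction — an admissible $(0,p_1,p_2,p_3)$ for every point of $\Theta_1\setminus\Delta_{\mathrm{III}}$ satisfying the case hypothesis. The step you single out as ``the hard part'' (that no choice of floors/ceilings works on $\Delta_{\mathrm{III}}$ itself) is not needed here (it only enters later, for $\Theta_2\cap\Delta=\emptyset$), whereas the part that is needed is precisely the branch-by-branch construction where your sketch is currently inaccurate. The flaw is repairable inside your own scheme — the exhaustive check of the at most $2^3$ candidate tuples would expose and correct it — but as written the argument in the $\CR_{12}^0$ branch would not verify the claim.
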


\begin{proof}
If $\xqz {x_0-x_1}< k_{01}-1\text{ and } \xqz{x_0-x_2}=\CK_{02}-1$, as the similar analysis in Lemma \ref{lemmacase2}, by Lemma \ref{pair1}, we can consider the following cases:

\begin{enumerate}
\item If the relation $\CR_{13}^0$ holds, 

\begin{enumerate}[(i)]
\item  If the relation $\CR_{12}^0$ holds, then 
we can find 
\[
p_i=\xqz{ x_0-x_i}, z_i=x_i+p_i, i=1,2,3
\]
such that $(0,p_1,p_2,p_3)\in A_0$ and $z_i\in S^3(-1,+1)$.

\item If  the relation $\CR_{12}^1$ and $\xqz{x_1-x_2}\leq k_{12}-2$ hold, then 
we can find 
\[
p_i=\xqz{ x_0-x_i},z_i=x_i+p_i, i=1,2,3
\]
such that $(0,p_1,p_2,p_3)\in A_0$ and $z_i\in S^3(-1,+1)$.

\item  If relations $\CR_{12}^1$, $\CR_{23}^0$ , $\xqz{x_2-x_3}\neq x_2-x_3$ and $\sqz{x_0-x_3}\leq\CK_{03}-1$ hold, then 
we can find 
\[
p_1=\sqz{ x_0-x_1}, p_2=\xqz{ x_0-x_2},p_3=\sqz{ x_0-x_3},z_i=x_i+p_i, i=1,2,3
\]
such that $(0,p_1,p_2,p_3)\in A_0$ and $z_i\in S^3(-1,+1)$.
\end{enumerate}

\item If the relation $\CR_{13}^1$ holds,

\begin{enumerate}[(i)]
\item If $\CR_{12}^0$ and $x_1-x_3\neq \xqz{x_1-x_3}\leq \CK_{13}-2$ hold, we can find 
\[
p_i=\xqz{x_0-x_i}, z_i=x_i+p_i, i=1,2,3
\]
such that $(0,p_1,p_2,p_3)\in A_0$ and $z_i\in S^3(-1,+1)$.

\item If $\CR_{12}^1$ holds, we can find 
\[
p_1=\sqz{x_0-x_1}, p_2=\xqz{x_0-x_2}, p_3=\xqz{x_0-x_3}, \quad z_i=x_i+p_i, i+1,2,3
\]
such that $(0,p_1,p_2,p_3)\in A_0$ and $z_i\in S^3(-1,+1)$.
\end{enumerate}
\end{enumerate}

By Lemma \ref{triple3}, if $\xqz{x_1-x_3} = x_1-x_3$, then the relation $\CR_{13}^0$ holds. Hence we can write  $\Delta_{\uppercase\expandafter{\romannumeral3}}$ as 
\[
\Delta_{\uppercase\expandafter{\romannumeral3}}=\Delta_{\uppercase\expandafter{\romannumeral3}}^1\cup\Delta_{\uppercase\expandafter{\romannumeral3}}^2\cup\Delta_{\uppercase\expandafter{\romannumeral3}}^3
\]
where 
\begin{eqnarray*}
&\Delta_{\uppercase\expandafter{\romannumeral3}}^1=\hkh{(x_0,x_1,x_2,x_3)\in\Theta_1\Bigg|
\begin{array}{clll} 
&\quad\CR_{12}^1,\CR_{13}^0,\\
&\xqz{x_0-x_1}<k_{01}-1,\xqz{x_0-x_2}= \CK_{02}-1, \\
&\xqz{x_1-x_2}=k_{12}-1,x_2-x_3=\xqz{x_2-x_3} 
\end{array} 
}\\
&\Delta_{\uppercase\expandafter{\romannumeral3}}^2=\hkh{(x_0,x_1,x_2,x_3)\in\Theta_1\Bigg|
\begin{array}{clll} 
&\quad\CR_{12}^1,\CR_{13}^0,\\
&\xqz{x_0-x_1}<k_{01}-1,\xqz{x_0-x_2}= \CK_{02}-1, \\
&\xqz{x_1-x_2}=k_{12}-1,\xqz{x_0-x_3} =\CK_{03}-1
\end{array} 
}
\\
&\Delta_{\uppercase\expandafter{\romannumeral3}}^3=\hkh{(x_0,x_1,x_2,x_3)\in\Theta_1\Bigg|
\begin{array}{clll} 
&\quad\CR_{12}^0,\CR_{13}^1,\xqz{x_1-x_3}=\CK_{13}-1,\\
&\xqz{x_0-x_1}<k_{01}-1,\xqz{x_0-x_2}= \CK_{02}-1
\end{array} 
}
\end{eqnarray*}
\end{proof}

\begin{lemma}\label{lemmacase4}
Let $\Delta_{\uppercase\expandafter{\romannumeral4}}=\Delta_{\uppercase\expandafter{\romannumeral4}}^1\cup\Delta_{\uppercase\expandafter{\romannumeral4}}^2\cup\Delta_{\uppercase\expandafter{\romannumeral4}}^3$
where
\begin{eqnarray*}
&\Delta_{\uppercase\expandafter{\romannumeral4}}^1=\hkh{(x_0,x_1,x_2,x_3)\in\Theta_1\Bigg|
\begin{array}{cll} 
&\quad\CR_{12}^1,\CR_{13}^0,\CR_{23}^0,\\
&\xqz{x_0-x_1}<k_{01}-1,\xqz{x_0-x_2}< \CK_{02}-1, \\
&\xqz{x_1-x_2}=k_{12}-1,\xqz{x_2-x_3}=x_2-x_3, 
\end{array} 
}\\
&\Delta_{\uppercase\expandafter{\romannumeral4}}^2=\hkh{(x_0,x_1,x_2,x_3)\in\Theta_1\Bigg|
\begin{array}{cl} 
&\quad\CR_{12}^1,\CR_{13}^0,\CR_{23}^0,\\
&\xqz{x_0-x_1}<k_{01}-1,\xqz{x_0-x_2}< \CK_{02}-1, \\
&\xqz{x_1-x_2}=k_{12}-1,\xqz{x_0-x_3}=\CK_{03}-1, 
\end{array} 
}\\
&\Delta_{\uppercase\expandafter{\romannumeral4}}^3=\hkh{(x_0,x_1,x_2,x_3)\in\Theta_1\Bigg|
\begin{array}{cl} 
&\quad\CR_{12}^0,\CR_{13}^0,\CR_{23}^1,\\
&\xqz{x_0-x_1}<k_{01}-1,\xqz{x_0-x_2}<\CK_{02}-1, \\
&\xqz{x_1-x_2}=x_1-x_2,\xqz{x_2-x_3}=k_{23}-1, 
\end{array} 
}
\end{eqnarray*}

If $\xqz {x_0-x_1}< k_{01}-1\text{ and } \xqz{x_0-x_2}<\CK_{02}-1$, then for any $(x_0,x_1,x_2,x_3)\in\Theta_1\setminus\Delta_{\uppercase\expandafter{\romannumeral4}}$, there exists $(0,p_1,p_2,p_3)\in A_0$ and $z_i\in S^3(-1,+1)$ such that $x_0=z_0, x_i=z_i-p_i, i=1,2,3$.
\end{lemma}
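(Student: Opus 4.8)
The plan is to follow the template of Lemmas \ref{lemmacase1}--\ref{lemmacase3}. Fix $(x_0,x_1,x_2,x_3)\in\Theta_1$ with $\xqz{x_0-x_1}<k_{01}-1$ and $\xqz{x_0-x_2}<\CK_{02}-1$, and put $a_i=\xqz{x_0-x_i}$, so that $a_0=0$, $a_1\le k_{01}-2$, $a_2\le\CK_{02}-2$, and $a_3\le\CK_{03}-1$ by the defining inequalities of $\Theta_1$. If $\mathbf{p}=(0,p_1,p_2,p_3)\in A_0$ and the $z_i$ with $z_0=x_0$ and $z_i=x_i+p_i$ ($i=1,2,3$) witness the conclusion of the lemma, then $-1<z_0-z_i=x_0-x_i-p_i<1$ forces $p_i\in\hkh{a_i,\sqz{x_0-x_i}}$, the second value being admissible only when $x_0-x_i\notin\BZ$. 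For the default choice $p_i=a_i$, Corollary \ref{pijcor} shows that each difference $z_i-z_j$ with $1\le i<j\le 3$ lies in $[0,1)$ when $\CR_{ij}^0$ holds and in $(-1,0)$ when $\CR_{ij}^1$ holds, so $(z_0,z_1,z_2,z_3)\in S^3(-1,+1)$ is automatic for $\mathbf{p}=(0,a_1,a_2,a_3)$; the only thing that can go wrong is the condition $\mathbf{p}\in A_0$, and the slack in $a_1\le k_{01}-2$, $a_2\le\CK_{02}-2$ is exactly what lets us replace $p_1$ or $p_2$ by $\sqz{x_0-x_i}$ when a repair is needed.

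Concretely, $(0,p_1,p_2,p_3)\in A_0$ unwinds to $p_1\le k_{01}-1$, $p_2-p_1\le k_{12}-1$, $p_3-p_2\le k_{23}-1$ together with $p_2\le\CK_{02}-1$, $p_3-p_1\le\CK_{13}-1$, $p_3\le\CK_{03}-1$, the three ``long'' inequalities being immediate consequences of the definitions of $\CK_{02},\CK_{13},\CK_{03}$ as minima over sums of the $k_{ij}$. I would then split into the six triples $(\CR_{12}^\alpha,\CR_{13}^\beta,\CR_{23}^\gamma)$ permitted by Lemma \ref{pair1} and Lemma \ref{triple1}. For $(0,0,0)$ the default choice works at once. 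The default $\mathbf{p}=(0,a_1,a_2,a_3)$ can fail only through $p_2-p_1=\xqz{x_1-x_2}+\alpha$ or $p_3-p_2=\xqz{x_2-x_3}+\gamma$ overshooting, i.e.\ when $\CR_{12}^1$ and $\xqz{x_1-x_2}=k_{12}-1$, or $\CR_{23}^1$ and $\xqz{x_2-x_3}=k_{23}-1$ (here $\CR_{ij}^1$ forces $x_i-x_j\notin\BZ$ by Corollary \ref{pijcor}, and likewise $x_0-x_i\notin\BZ$). In the first situation set $p_1=\sqz{x_0-x_1}$, legitimate because $a_1\le k_{01}-2$; this fixes $p_2-p_1$ and $z_1-z_2$ but raises $z_1-z_3$ by $1$, harmless if $\CR_{13}^1$ and otherwise requiring also $p_3=\sqz{x_0-x_3}$, which is admissible exactly when $x_0-x_3\notin\BZ$ and $\xqz{x_0-x_3}<\CK_{03}-1$; here a consistency argument from \eqref{xqz} together with $\xqz{x_1-x_3}<\CK_{13}$ shows that $\xqz{x_2-x_3}\le k_{23}-2$ automatically, so $p_3-p_2\le k_{23}-1$ survives the bump. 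The second situation is symmetric: bump $p_2$ up (and $p_1$ as well when $\xqz{x_1-x_2}=k_{12}-1$), which is obstructed precisely when $x_1-x_2\in\BZ$. The residual branches in which no admissible $\mathbf{p}$ exists are exactly $\Delta_{\mathrm{IV}}^1$, $\Delta_{\mathrm{IV}}^2$, $\Delta_{\mathrm{IV}}^3$; on each of them one checks, using \eqref{xqz} and Lemma \ref{triple1}, both that every repair fails and that the integer degeneracies which would otherwise supply an escape route ($x_0-x_3\in\BZ$, etc.) are incompatible with the relations defining the branch --- for instance $\CR_{12}^1\wedge\CR_{13}^0$ forces $x_0-x_3\notin\BZ$, and $\CR_{12}^1\wedge\CR_{23}^0\wedge(\xqz{x_1-x_2}=k_{12}-1)$ forces $\xqz{x_2-x_3}\le k_{23}-2$.

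The routine part is the bookkeeping: for each of the six relation triples, and the sub-cases $\xqz{x_1-x_2}=k_{12}-1$ vs.\ $<k_{12}-1$, $\xqz{x_2-x_3}=k_{23}-1$ vs.\ $<k_{23}-1$, $\xqz{x_0-x_3}=\CK_{03}-1$ vs.\ $<\CK_{03}-1$, together with the integrality of $x_1-x_2$, $x_2-x_3$, $x_0-x_3$, one verifies the six $A_0$-inequalities from the shape of $\CK_{02},\CK_{13},\CK_{03}$ and reads off $S^3(-1,+1)$-membership from Corollary \ref{pijcor}. The main obstacle is the two-sided nature of the argument on the exceptional branches: beyond exhibiting a valid $\mathbf{p}$ whenever one exists, one has to show that on $\Delta_{\mathrm{IV}}^1\cup\Delta_{\mathrm{IV}}^2\cup\Delta_{\mathrm{IV}}^3$ \emph{no} admissible choice $p_i\in\hkh{\xqz{x_0-x_i},\sqz{x_0-x_i}}$ simultaneously lands in $A_0$ and gives $(z_0,z_1,z_2,z_3)\in S^3(-1,+1)$. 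This is where the arithmetic of \eqref{xqz} and Lemma \ref{triple1} does the real work, playing in our approach the role that \cite[Lemma 2.4]{DimitrovGeorge2016} plays in the length-$3$ case.
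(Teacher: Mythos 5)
Your outline follows the paper's strategy (choose each $p_i\in\hkh{\xqz{x_0-x_i},\sqz{x_0-x_i}}$, split over the six triples of Lemma \ref{triple1}, read off $S^3(-1,+1)$ from Corollary \ref{pijcor}), but the organizing claim you build the case analysis on is false, and it leaves points of $\Theta_1\setminus\Delta_{\mathrm{IV}}$ uncovered. You assert that the three ``long'' inequalities are automatic and that the default $\mathbf{p}=(0,a_1,a_2,a_3)$ can fail only through $p_2-p_1$ or $p_3-p_2$ overshooting. This ignores the constraint $p_3-p_1\le k_{13}-1$, which is \emph{not} a consequence of $p_1\le k_{01}-1$, $p_2-p_1\le k_{12}-1$, $p_3-p_2\le k_{23}-1$ when $k_{13}<k_{12}+k_{23}-1$ (i.e.\ $\CK_{13}=k_{13}$). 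If $\CR_{13}^1$ holds and $\xqz{x_1-x_3}=\CK_{13}-1=k_{13}-1$, the default gives $p_3-p_1=k_{13}$, so $\CE[\mathbf{p}]$ is not Ext, and this can happen with $\xqz{x_1-x_2}\le k_{12}-2$ and $\xqz{x_2-x_3}\le k_{23}-2$, i.e.\ with neither of your two listed failure modes. Concretely, take $(k_{01},k_{12},k_{23},k_{13})=(3,3,3,2)$ with $k_{02},k_{03}$ large and $(x_0,x_1,x_2,x_3)=(0,-0.6,-1.3,-2.1)$: one checks the point lies in $\Theta_1$, satisfies $\xqz{x_0-x_1}<k_{01}-1$, $\xqz{x_0-x_2}<\CK_{02}-1$, has $(\CR_{12}^1,\CR_{13}^1,\CR_{23}^1)$, is not in $\Delta_{\mathrm{IV}}$ (every $\Delta_{\mathrm{IV}}^i$ requires $\CR_{13}^0$), yet the default $(0,0,1,2)$ fails because $p_3-p_1=2>k_{13}-1$. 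The lemma is still true there, but only via raising $p_1$ (the paper's choices in its cases (b), (c), (f)), a repair your recipe invokes only when $\xqz{x_1-x_2}=k_{12}-1$. So the whole branch $\CR_{13}^1$ needs its own treatment, which is exactly why the paper's proof enumerates admissible choices relation-by-relation rather than arguing ``default plus two repairs.''

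The repair analysis is also imprecise in ways that matter for the statement. In your ``first situation'' the bump $p_3=\sqz{x_0-x_3}$ is admissible not ``exactly when $x_0-x_3\notin\BZ$ and $\xqz{x_0-x_3}<\CK_{03}-1$'': one also needs $x_2-x_3\notin\BZ$ (otherwise $z_2-z_3=-1$ under $\CR_{23}^0$), and this omitted obstruction is precisely what carves out $\Delta_{\mathrm{IV}}^1$ versus $\Delta_{\mathrm{IV}}^2$. In your ``second situation'' the obstruction is not ``precisely $x_1-x_2\in\BZ$'': when $\CR_{13}^1$ holds one raises both $p_1$ and $p_2$ and escapes even with $x_1-x_2\in\BZ$ (which is why $\Delta_{\mathrm{IV}}^3$ carries the condition $\CR_{13}^0$); as written your recipe either declares such points of $\Theta_1\setminus\Delta_{\mathrm{IV}}$ obstructed or proposes a $\mathbf{p}$ violating $S^3(-1,+1)$. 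Finally, the ``two-sided'' verification you single out as the main obstacle is not part of this lemma, which is one-directional (existence off $\Delta_{\mathrm{IV}}$); the disjointness $\Theta_2\cap\Delta=\emptyset$ is handled separately in the proof of Theorem \ref{thmmain}. To fix the argument you need to redo the bookkeeping over all six triples $(\alpha,\beta,\gamma)$, checking all six $A_0$-inequalities (in particular the $k_{13}$ one) and all integrality obstructions for each candidate bump, as the paper's proof of this lemma does.
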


\begin{proof}
If $\xqz {x_0-x_1}< k_{01}-1\text{ and } \xqz{x_0-x_2}<\CK_{02}-1$, as similar analysis in Lemma \ref{lemmacase2}, we can list all possible choices as follows

\begin{enumerate}[(A)]
\item\label{A120} If the relation $\CR_{12}^0$ holds,  then we have the following possible choices
\begin{enumerate}[(1)]
\item\label{A1201} $p_1=\xqz{x_0-x_1}$ and $p_2=\xqz{x_0-x_2}$,
\item\label{A1202} $p_1=\sqz{x_0-x_1}$ and $p_2=\sqz{x_0-x_2}$,
\item\label{A1203} If $x_1-x_2\neq \xqz{x_1-x_2} \text{ and } \xqz{x_1-x_2}\leq k_{12}-2$,
then we can choose
\[
 p_1=\xqz{x_0-x_1} \text{ and } p_2=\sqz{x_0-x_2}.
\]
\end{enumerate}
\item\label{A121} If the relation $\CR_{12}^1$ holds,  then we have the following possible choices

\begin{enumerate}[(1)]
\item\label{A1211} $p_1=\sqz{x_0-x_1}$ and $p_2=\xqz{x_0-x_2}$,
\item\label{A1212} If $\xqz{x_1-x_2}\leq k_{12}-2$ then $p_1=\xqz{x_0-x_1}$ and $p_2=\xqz{x_0-x_2}$, or
\item\label{A1213} If $\xqz{x_1-x_2}\leq k_{12}-2$ then $p_1=\sqz{x_0-x_1}$ and $p_2=\sqz{x_0-x_2}$,
\end{enumerate}
\item\label{A130} If the relation $\CR_{13}^0$ holds,  then we have the following possible choices

\begin{enumerate}[(1)]
\item\label{A1301} $p_1=\xqz{x_0-x_1}$ and $p_3=\xqz{x_0-x_3}$,
\item\label{A1302} If $\sqz{x_0-x_3}\leq k_{03}-1$, then we can choose $p_1=\sqz{x_0-x_1}$ and $p_3=\sqz{x_0-x_3}$,
\item\label{A1303} If $x_1-x_3\neq \xqz{x_1-x_3}, \xqz{x_1-x_3}\leq k_{13}-2,\text{ and } \sqz{x_0-x_3}\leq\CK_{03}-1$
then we can choose
\[
 p_1=\xqz{x_0-x_1} \text{ and } p_3=\sqz{x_0-x_3}.
\]
\end{enumerate}
\item\label{A131} If the relation $\CR_{13}^1$ holds,  then we have the following possible choices

\begin{enumerate}[(1)]
\item\label{A1311} $p_1=\sqz{x_0-x_1}$ and $p_3=\xqz{x_0-x_3}$,
\item\label{A1312} If $\xqz{x_1-x_3}\leq k_{13}-2$ then we can choose $p_1=\xqz{x_0-x_1}$ and $p_3=\xqz{x_0-x_3}$, or
\item\label{A1313} If $\xqz{x_1-x_3}\leq k_{13}-2$ and $\sqz{x_0-x_3}\leq\CK_{03}-1$ then we can choose $p_1=\sqz{x_0-x_1}$ and $p_3=\sqz{x_0-x_3}$,
\end{enumerate}
\item\label{A230} If the relation $\CR_{23}^0$ holds,  then we have the following possible choices
\begin{enumerate}[(1)]
\item\label{A2301} $p_2=\xqz{x_0-x_2}$ and $p_3=\xqz{x_0-x_3}$,
\item\label{A2302} If $\sqz{x_0-x_3}\leq\CK_{03}-1$, then we can choose $p_2=\sqz{x_0-x_2}$ and $p_3=\sqz{x_0-x_3}$,
\item\label{A2303} If $x_2-x_3\neq \xqz{x_2-x_3},  \xqz{x_2-x_3}\leq k_{23}-2, \text{ and } \sqz{x_0-x_3}\leq\CK_{03}-1$
then we can choose
\[
 p_2=\xqz{x_0-x_2} \text{ and } p_3=\sqz{x_0-x_3}.
\]
\end{enumerate}
\item\label{A231} If the relation $\CR_{23}^1$ holds,  then we have the following possible choices

\begin{enumerate}[(1)]
\item\label{A2311} $p_2=\sqz{x_0-x_2}$ and $p_3=\xqz{x_0-x_3}$,
\item\label{A2312} If $\xqz{x_2-x_3}\leq k_{23}-2$, then we can choose $p_2=\xqz{x_0-x_2}$ and $p_3=\xqz{x_0-x_3}$, or
\item\label{A2313} If $\xqz{x_2-x_3}\leq k_{23}-2$ and $\sqz{x_0-x_3}\leq\CK_{03}-1$, then we can choose $p_2=\sqz{x_0-x_2}$ and $p_3=\sqz{x_0-x_3}$.
\end{enumerate}
\end{enumerate} 

By  Lemma \ref{triple1}, Corollary \ref{triple2} , Corollary \ref{pijcor} and above analysis \eqref{A120}-\eqref{A121}-\eqref{A130}-\eqref{A131}-\eqref{A230}-\eqref{A231}, for this case, we have the following choices

\begin{enumerate}[(a)]
\item If $\CR_{12}^0$,  $\CR_{13}^0$ and $\CR_{23}^0$ hold, we can choose \eqref{A1201}-\eqref{A1301}-\eqref{A2301} 
we can find 
\[
p_i=\xqz{x_0-x_i}, z_i=x_i+p_i, i=1,2, 3,
\]
such that $(0,p_1,p_2,p_3)\in A_0$ and $z_i\in S^3(-1,+1)$.

\item If $\CR_{12}^0$,  $\CR_{13}^1$ and $\CR_{23}^1$ hold, we can choose \eqref{A1202}-\eqref{A1311}-\eqref{A2311}, we can find 
\[
p_1=\sqz{ x_0-x_1},\quad p_2=\sqz{ x_0-x_2} , \quad p_3=\xqz{x_0-x_3}, \quad z_i=x_i+p_i, i=1,2, 3
\]
such that $(0,p_1,p_2,p_3)\in A_0$ and $z_i\in S^3(-1,+1)$.

\item If $\CR_{12}^1$,  $\CR_{13}^1$ and $\CR_{23}^0$ hold, we can choose \eqref{A1211}-\eqref{A1311}-\eqref{A2301}, we can find 
\[
p_1=\sqz{x_0-x_1},  \quad p_2=\xqz{ x_0-x_2}, \quad p_3=\xqz{ x_0-x_3} \quad z_i=x_i+p_i, i=1, 2,3,
\]
such that $(0,p_1,p_2,p_3)\in A_0$ and $z_i\in S^3(-1,+1)$.

\item 
If $\CR_{12}^1$,  $\CR_{13}^0$ and $\CR_{23}^0$ hold,  

\begin{enumerate}[(i)]
\item if $\xqz{x_2-x_3}\leq k_{23}-2, x_2-x_3\neq \xqz{x_2-x_3} \text{ and } \xqz{x_0-x_3}\leq \CK_{03}-2,$
we can choose \eqref{A1211}-\eqref{A1302}-\eqref{A2303}, 
we can find 
\[
p_1=\sqz{x_0-x_1},  \quad p_2=\xqz{ x_0-x_2}, \quad p_3=\sqz{ x_0-x_3}, z_i=x_i+p_i, i=1,2,3
\]
such that $(0,p_1,p_2,p_3)\in A_0$ and $z_i\in S^3(-1,+1)$,

\item if $\xqz{x_1-x_2}\leq k_{12}-2$, we can choose \eqref{A1212}-\eqref{A1301}-\eqref{A2301} ,we can find
\[
p_i=\xqz{x_0-x_i}, z_i=x_i+p_i, i=1,2,3
\]
or moreover $\xqz{x_0-x_3}\leq \CK_{03}-2$, we can also choose \eqref{A1213}-\eqref{A1302}-\eqref{A2302}, we can find
\[
p_i=\sqz{x_0-x_i}, z_i=x_i+p_i, i=1,2,3
\]
such that $(0,p_1,p_2,p_3)\in A_0$ and $z_i\in S^3(-1,+1)$,
\end{enumerate}

\item 
If $\CR_{12}^0$,  $\CR_{13}^0$ and $\CR_{23}^1$ hold,  

\begin{enumerate}[(i)]
\item
if $\xqz{x_2-x_3}\leq k_{23}-2$, we can choose \eqref{A1201}-\eqref{A1301}-\eqref{A2312},
we can find
\[
p_i=\xqz{x_0-x_i}, z_i=x_i+p_i, i=1,2,3
\]
or moreover if $\sqz{x_0-x_3}\leq \CK_{03}-1$, we can also choose \eqref{A1202}-\eqref{A1302}-\eqref{A2313},
we can find
\[
p_i=\sqz{x_0-x_i}, z_i=x_i+p_i, i=1,2,3
\]
such that $(0,p_1,p_2,p_3)\in A_0$ and $z_i\in S^3(-1,+1)$,

\item
if $x_1-x_2\neq\xqz{x_1-x_2}$ , $\xqz{x_1-x_2}\leq k_{12}-2$, 
we can choose \eqref{A1203}-\eqref{A1301}-\eqref{A2311},
we can find
\[
p_1=\xqz{x_0-x_1}, p_2=\sqz{x_0-x_2}, p_3=\xqz{x_0-x_3}, z_i=x_i+p_i, i=1,2,3
\]
such that $(0,p_1,p_2,p_3)\in A_0$ and $z_i\in S^3(-1,+1)$,
\end{enumerate}

\item 
If $\CR_{12}^1$,  $\CR_{13}^1$ and $\CR_{23}^1$ hold,  
\begin{enumerate}[(i)]
\item
if $\xqz{x_2-x_3}\leq k_{23}-2$, we can choose \eqref{A1211}-\eqref{A1311}-\eqref{A2312}, we can find
\[
p_1=\sqz{x_0-x_1}, p_2=\xqz{x_0-x_2}, p_3=\xqz{x_0-x_3}, z_i=x_i+p_i, i=1,2,3
\]
such that $(0,p_1,p_2,p_3)\in A_0$ and $z_i\in S^3(-1,+1)$,

\item
if $\xqz{x_1-x_2}\leq k_{12}-2$, we can choose \eqref{A1212}-\eqref{A1311}-\eqref{A2311}, we can find
\[
p_1=\sqz{x_0-x_1},p_2=\sqz{x_0-x_2},p_3=\xqz{x_0-x_3} z_i=x_i+p_i, i=1,2,3
\]
such that $(0,p_1,p_2,p_3)\in A_0$ and $z_i\in S^3(-1,+1)$.
\end{enumerate}
\end{enumerate}
If $\CR_{12}^1$,  $\CR_{13}^0$ and $\CR_{23}^0$ hold, i.e.
\begin{align*}
&\xqz{x_0-x_2}-\xqz{x_0-x_1}=\xqz{x_1-x_2}+1\\
&\xqz{x_0-x_3}-\xqz{x_0-x_1}=\xqz{x_1-x_3}\\
&\xqz{x_0-x_3}-\xqz{x_0-x_2}=\xqz{x_2-x_3}
\end{align*}
then we have
\[
\xqz{x_1-x_3}=\xqz{x_1-x_2}+\xqz{x_2-x_3}+1
\]
If moreover, 
\[
\xqz{x_1-x_2}=k_{12}-1,\xqz{x_2-x_3}=k_{23}-1
\]
then we have
\[
\xqz{x_1-x_3}=\xqz{x_1-x_2}+\xqz{x_2-x_3}+1=k_{12}+k_{23}-1
\]
But
\[
\xqz{x_1-x_3}\leq \CK_{13}-1\leq k_{12}+k_{23}-2
\]
This is a contraction. Hence, if $\xqz{x_1-x_2}=k_{12}-1$ and $\xqz{x_2-x_3}=k_{23}-1$ hold then $\CR_{12}^1$,  $\CR_{13}^0$ and $\CR_{23}^0$ does not hold at the same time. And the same reason, if $\xqz{x_1-x_2}=k_{12}-1$ and $\xqz{x_2-x_3}=k_{23}-1$ hold then 
\begin{itemize}
\item
$\CR_{12}^1$,  $\CR_{13}^1$ and $\CR_{23}^1$ does not hold at the same time.
\item 
$\CR_{12}^0$,  $\CR_{13}^0$ and $\CR_{23}^1$ does not hold at the same time.
\end{itemize}

By above analysis, we can write $\Delta_{\uppercase\expandafter{\romannumeral4}}$ by 
\[
\Delta_{\uppercase\expandafter{\romannumeral4}}=\Delta_{\uppercase\expandafter{\romannumeral4}}^1\cup\Delta_{\uppercase\expandafter{\romannumeral4}}^2\cup\Delta_{\uppercase\expandafter{\romannumeral4}}^3
\]
where
\begin{eqnarray*}
&\Delta_{\uppercase\expandafter{\romannumeral4}}^1=\hkh{(x_0,x_1,x_2,x_3)\in\Theta_1\Bigg|
\begin{array}{cll} 
&\quad\CR_{12}^1,\CR_{13}^0,\CR_{23}^0,\\
&\xqz{x_0-x_1}<k_{01}-1,\xqz{x_0-x_2}< \CK_{02}-1, \\
&\xqz{x_1-x_2}=k_{12}-1,\xqz{x_2-x_3}=x_2-x_3, 
\end{array} 
}\\
&\Delta_{\uppercase\expandafter{\romannumeral4}}^2=\hkh{(x_0,x_1,x_2,x_3)\in\Theta_1\Bigg|
\begin{array}{cl} 
&\quad\CR_{12}^1,\CR_{13}^0,\CR_{23}^0,\\
&\xqz{x_0-x_1}<k_{01}-1,\xqz{x_0-x_2}< \CK_{02}-1, \\
&\xqz{x_1-x_2}=k_{12}-1,\xqz{x_0-x_3}=\CK_{03}-1, 
\end{array} 
}\\
&\Delta_{\uppercase\expandafter{\romannumeral4}}^3=\hkh{(x_0,x_1,x_2,x_3)\in\Theta_1\Bigg|
\begin{array}{cl} 
&\quad\CR_{12}^0,\CR_{13}^0,\CR_{23}^1,\\
&\xqz{x_0-x_1}<k_{01}-1,\xqz{x_0-x_2}<\CK_{02}-1, \\
&\xqz{x_1-x_2}=x_1-x_2,\xqz{x_2-x_3}=k_{23}-1, 
\end{array} 
}
\end{eqnarray*}

\end{proof}

Using inequality \eqref{xqz}, Lemma \ref{triple1},  Lemma \ref{lemmacase1}, Lemma \ref{lemmacase2}, Lemma \ref{lemmacase3}, Lemma \ref{lemmacase4}, Corollary \ref{triple2} and Corollary \ref{pijcor}, we will prove the  Theorem \ref{thmmain} of this paper:

\begin{proof}
We write 
\[
\Theta_2=
\bigcup_{(0,p_1,p_2,p_3)\in A_0}\hkh{(x_0,x_1,x_2,x_3)\Big|
\begin{array}{cll} 
-1+p_i&<x_0-x_i<&1+p_i,  i=1,2,3\\
-1+p_k-p_j&<x_j-x_k<&1+p_k-p_j ,1\leq j<k\leq 3
\end{array} 
}.
\]

By the definition of $A_0$,  the following inequalities hold
\begin{align}
&p_1\leq k_{01}-1, \label{pinequ1}\\
& p_2\leq \CK_{02}-1, \label{pinequ2}\\
& p_3\leq\CK_{03}-1,\label{pinequ3}\\
&p_2-p_1\leq k_{12}-1, \label{pinequ4}\\
&p_3-p_2\leq k_{23}-1,\label{pinequ5}\\
&p_3-p_1\leq \CK_{13}-1\label{pinequ6}
\end{align}

By the above inequalities \eqref{pinequ1}-\eqref{pinequ6}, we have 
\[
\Theta_2\subseteq \Theta_1
\]

By the homeomorphism \eqref{homeo}, $\Theta_\CE$ is homeomorphism to $\BR_{>0}^3\times \Theta_2$. To prove the theorem, we need to show that for every $(x_0,x_1,x_2,x_3)\in\Theta_1\setminus \Delta$, there exist $(0,p_1,p_2,p_3)\in A_0$ and $z_i\in S^3(-1,+1)$ such that $(x_0,x_1,x_2,x_3)=(z_0,z_1,z_2,z_3)-(0,p_1,p_2,p_3)$ and $\Theta_2\cap\Delta=\emptyset$. By the definition of $S^3(-1,+1)$, we need to choose $z_i$ such that $z_i\in (x_0-1,x_0+1)$, hence the possible choices of $p_i$ are $p_i=\xqz{x_0-x_i}$ or $p_i=\sqz{x_0-x_i}$ and $z_i=x_i+p_i$. We need to check the inequalities \eqref{pinequ1}-\eqref{pinequ6} for $p_i$ and $z_i\in S^3(-1,+1)$.

By relations $\CR_{ij}^0$, $\CR_{ij}^1$, the inequalities \eqref{pinequ1}-\eqref{pinequ6} and Corollary \ref{pijcor}, we need to consider the following four cases.

\begin{enumerate}[(I)]
\item \label{case1}$\xqz{ x_0-x_1}=k_{01}-1$ and $\xqz{ x_0-x_2}=\CK_{02}-1$
\item \label{case2}$\xqz{ x_0-x_1}=k_{01}-1$ and $\xqz{ x_0-x_2}<\CK_{02}-1$
\item \label{case3}$\xqz{ x_0-x_1}<k_{01}-1$ and $\xqz{ x_0-x_2}=\CK_{02}-1$
\item \label{case4} $\xqz{ x_0-x_1}<k_{01}-1$ and $\xqz{ x_0-x_2}<\CK_{02}-1$
\end{enumerate}

By the above analysis, we can describe the set $\Delta$ as follows.

\begin{itemize}

\item For Case \ref{case1}, by Lemma \ref{lemmacase1}, there are no elements in $\Delta$.

\item For Case \ref{case2}, by Lemma \ref{lemmacase2}, Then the set of elements of $\Delta$ in this case is $\Delta_{\uppercase\expandafter{\romannumeral2}}$

\item For Case \ref{case3}, by Lemma \ref{lemmacase3}, Then the set of elements of $\Delta$ in this case is $\Delta_{\uppercase\expandafter{\romannumeral3}}$


\item For Case \ref{case4},  by Lemma \ref{lemmacase4} , Then the set of elements of $\Delta$ in this case is $\Delta_{\uppercase\expandafter{\romannumeral4}}$
\end{itemize}

By the Corollary \ref{triple2}, we can rewrite $\Delta_{\uppercase\expandafter{\romannumeral4}}^1,\Delta_{\uppercase\expandafter{\romannumeral4}}^2,\Delta_{\uppercase\expandafter{\romannumeral4}}^3$ as

\begin{eqnarray*}
&\Delta_{\uppercase\expandafter{\romannumeral4}}^1=\hkh{(x_0,x_1,x_2,x_3)\in\Theta_1\Bigg|
\begin{array}{cll} 
&\quad\CR_{12}^1,\CR_{13}^0,\\
&\xqz{x_0-x_1}<k_{01}-1,\xqz{x_0-x_2}< \CK_{02}-1, \\
&\xqz{x_1-x_2}=k_{12}-1,\xqz{x_2-x_3}=x_2-x_3, 
\end{array} 
}\\
&\Delta_{\uppercase\expandafter{\romannumeral4}}^2=\hkh{(x_0,x_1,x_2,x_3)\in\Theta_1\Bigg|
\begin{array}{cl} 
&\quad\CR_{12}^1,\CR_{13}^0,\\
&\xqz{x_0-x_1}<k_{01}-1,\xqz{x_0-x_2}< \CK_{02}-1, \\
&\xqz{x_1-x_2}=k_{12}-1,\xqz{x_0-x_3}=\CK_{03}-1, 
\end{array} 
}\\
&\Delta_{\uppercase\expandafter{\romannumeral4}}^3=\hkh{(x_0,x_1,x_2,x_3)\in\Theta_1\Bigg|
\begin{array}{cl} 
&\quad\CR_{13}^0,\CR_{23}^1,\\
&\xqz{x_0-x_1}<k_{01}-1,\xqz{x_0-x_2}<\CK_{02}-1, \\
&\xqz{x_1-x_2}=x_1-x_2,\xqz{x_2-x_3}=k_{23}-1, 
\end{array} 
}
\end{eqnarray*}

By the Corollary \ref{triple2}, we can rewrite $\Delta_{\uppercase\expandafter{\romannumeral2}}$ as $\Delta_{\uppercase\expandafter{\romannumeral2}}=\Delta^1\cup\Delta_{\uppercase\expandafter{\romannumeral2}}^2$,
where  
\begin{eqnarray*}
&\Delta^1=\hkh{(x_0,x_1,x_2,x_3)\in\Theta_1\Bigg|
\begin{array}{cll} 
&\quad\CR_{23}^1,\quad\CR_{13}^1\\
&\xqz{x_0-x_1}=k_{01}-1,\xqz{x_0-x_2}< \CK_{02}-1, \\
&\xqz{x_2-x_3}=k_{23}-1,x_1-x_2=\xqz{x_1-x_2} 
\end{array} 
}\\
&\Delta_{\uppercase\expandafter{\romannumeral2}}^2=\hkh{(x_0,x_1,x_2,x_3)\in\Theta_1\Bigg|
\begin{array}{cll} 
&\quad\CR_{23}^1,\quad\CR_{13}^0\\
&\xqz{x_0-x_1}=k_{01}-1,\xqz{x_0-x_2}< \CK_{02}-1, \\
&\xqz{x_2-x_3}=k_{23}-1,x_1-x_2=\xqz{x_1-x_2} 
\end{array} 
}
\end{eqnarray*}

Then we hvae
\begin{eqnarray*}
&\Delta^2:=\Delta_{\uppercase\expandafter{\romannumeral4}}^3\cup\Delta_{\uppercase\expandafter{\romannumeral2}}^2=\hkh{(x_0,x_1,x_2,x_3)\in\Theta_1\Bigg|
\begin{array}{cll} 
&\CR_{23}^1,\quad\CR_{13}^0,\quad \xqz{x_0-x_2}< \CK_{02}-1, \\
&\xqz{x_2-x_3}=k_{23}-1,x_1-x_2=\xqz{x_1-x_2} 
\end{array} 
}\\
&\Delta^3:=\Delta_{\uppercase\expandafter{\romannumeral3}}^1\cup\Delta_{\uppercase\expandafter{\romannumeral4}}^1=\hkh{(x_0,x_1,x_2,x_3)\in\Theta_1\Bigg|
\begin{array}{clll} 
&\CR_{12}^1,\CR_{13}^0, \quad \xqz{x_0-x_1}<k_{01}-1, \\
&\xqz{x_1-x_2}=k_{12}-1,x_2-x_3=\xqz{x_2-x_3} 
\end{array} 
}\\
&\Delta^4:=\Delta_{\uppercase\expandafter{\romannumeral3}}^2\cup\Delta_{\uppercase\expandafter{\romannumeral4}}^2=\hkh{(x_0,x_1,x_2,x_3)\in\Theta_1\Bigg|
\begin{array}{clll} 
&\CR_{12}^1,\CR_{13}^0, \quad \xqz{x_0-x_1}<k_{01}-1, \\
&\xqz{x_1-x_2}=k_{12}-1,\xqz{x_0-x_3} =\CK_{03}-1
\end{array} 
}
\end{eqnarray*}

For the convenience we rewrite $\Delta_{\uppercase\expandafter{\romannumeral3}}^3$ by $\Delta^5$.

Hence we prove that
\[
\Delta=\Delta_{\uppercase\expandafter{\romannumeral2}}
\cup \Delta_{\uppercase\expandafter{\romannumeral3}}\cup\Delta_{\uppercase\expandafter{\romannumeral4}}=\bigcup_{i=1}^5\Delta^i.
\]

By the construction of $\Delta$, we have $\Theta_2\cap \Delta=\emptyset$.
We complete the  proof.
\end{proof}


\section{Examples}\label{example}

In this section we give an application of the Theorem \ref{thmmain}. Let $X$ be a quadric surface, which isomorphism to $\BP^1\times \BP^1$, and  $\DB(X)$ is the bounded derived category of coherent sheaves on $X$.  We use the notation
\[
\CO(a,b)=\pi_1^*\CO_{\BP^1}(a)\otimes\pi_2^*\CO_{\BP^1}(b)
\]
where $\pi_1,\pi_2: X\to\BP^1$ are the projections.

Then $\CE=\ykh{\CO,\CO(1,0),\CO(0,1),\CO(1,1)}$ is a complete strong exceptional collection on $\DB(X)$. For $\CE$, we have
\[
k_{ij}=0,\CK_{02}=\CK_{13}=-1, \text{ and } \CK_{03}=-2.
\]

Then we can rewrite the Theorem \ref{thmmain} as follwes:

\begin{prop}
Let $X$ be a quadric surface, $\CE=\ykh{\CO,\CO(1,0),\CO(0,1),\CO(1,1)}$ be a complete strong exceptional collection on $\DB(X)$. 

Then 
\[
\Theta_1=\hkh{
(y_0,y_1,y_2,y_3)\Bigg|\begin{array}{cl} 
&y_0<y_1<y_2<y_3, \quad y_1-y_3<-1\\
&y_0-y_3<-2,\quad y_0-y_2<-1
\end{array} 
}.
\]
and we can express $\Theta_{\CE}$ as followes:
\[
\BR_{>0}^4\times \ykh{\Theta_1\setminus\Delta}
\]
where $\Delta$ is the union of following five sets
\begin{eqnarray*}
&\Delta^1=\hkh{(x_0,x_1,x_2,x_3)\in\Theta_1\Bigg|
\begin{array}{cll} 
&\xqz{x_1-x_3}=\xqz{x_0-x_3},\\
&\xqz{x_0-x_1}=-1,\xqz{x_0-x_2}=\xqz{x_0-x_3}<-2, \\
&\xqz{x_2-x_3}=-1,x_1-x_2=\xqz{x_1-x_2} 
\end{array} 
}\\
&\Delta^2=\hkh{(x_0,x_1,x_2,x_3)\in\Theta_1\Bigg|
\begin{array}{cll} 
& \xqz{x_0-x_3}=\xqz{x_0-x_2}< -2, \\
&\xqz{x_0-x_3}-\xqz{x_0-x_1}=\xqz{x_1-x_3},\\
&\xqz{x_2-x_3}=-1,x_1-x_2=\xqz{x_1-x_2} 
\end{array} 
}\\
&\Delta^3=\hkh{(x_0,x_1,x_2,x_3)\in\Theta_1\Bigg|
\begin{array}{clll} 
&\xqz{x_0-x_1}=\xqz{x_0-x_2}<-1, \\
&\xqz{x_0-x_3}-\xqz{x_0-x_1}=\xqz{x_1-x_3},\\
&\xqz{x_1-x_2}=-1,x_2-x_3=\xqz{x_2-x_3} 
\end{array} 
}\\
&\Delta^4=\hkh{(x_0,x_1,x_2,x_3)\in\Theta_1\Bigg|
\begin{array}{clll} 
&\xqz{x_0-x_1}=\xqz{x_0-x_2}<-1, \\
&\xqz{x_0-x_1}+\xqz{x_1-x_3}+3=0,\\
&\xqz{x_1-x_2}=-1,\xqz{x_0-x_3}=-3
\end{array} 
}\\
&\Delta^5=\hkh{(x_0,x_1,x_2,x_3)\in\Theta_1\Bigg|
\begin{array}{clll} 
&\xqz{x_0-x_1}<-1,\\
&\xqz{x_0-x_2}= -2,\xqz{x_1-x_3}= -2\\
&\xqz{x_0-x_1}=-\xqz{x_1-x_2}-2=\xqz{x_0-x_3}+1
\end{array} 
}.
\end{eqnarray*}
\end{prop}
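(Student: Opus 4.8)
The plan is to obtain the proposition as a direct specialization of Theorem \ref{thmmain}. The only inputs needed are (i) that $\mathcal{E}=\ykh{\CO,\CO(1,0),\CO(0,1),\CO(1,1)}$ satisfies the hypotheses of Theorem \ref{thmmain}, and (ii) the values of the numbers $k_{ij}$, $\CK_{02}$, $\CK_{13}$, $\CK_{03}$ attached to $\mathcal{E}$. For (i): $X=\BP^1\times\BP^1$ is smooth projective over $\BC$, so $\DB(X)$ is a finite linear triangulated category over $\BC$; and $\mathcal{E}$ is the classical Beilinson--Kapranov collection, the external product of the collection $\hkh{\CO,\CO(1)}$ on each $\BP^1$-factor, hence a complete strong exceptional collection. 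Thus Theorem \ref{thmmain} applies verbatim.

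For (ii), I would compute the Hom-spaces by the K\"unneth formula: since $\Hom^k\ykh{\CO(a,b),\CO(c,d)}=H^k\ykh{X,\CO(c-a,d-b)}$ and $H^k\ykh{X,\CO(m,n)}=\bigoplus_{p+q=k}H^p\ykh{\BP^1,\CO(m)}\otimes H^q\ykh{\BP^1,\CO(n)}$, a short inspection of the cohomology of line bundles on $\BP^1$ shows that every $\Hom^\bullet(E_i,E_j)$ with $i<j$ is concentrated in cohomological degree $0$, so that $k_{ij}=0$ for all $0\le i<j\le3$. Substituting $k_{ij}=0$ into the definitions in Theorem \ref{thmmain} gives $\CK_{02}=\min\hkh{0,-1}=-1$, $\CK_{13}=\min\hkh{0,-1}=-1$, $\CK_{03}=\min\hkh{-2,-1,-1,0}=-2$.

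It then remains to substitute these constants into the description of $\Theta_1$ and of the sets $\Delta^1,\dots,\Delta^5$. The defining inequalities of $\Theta_1$ become $y_0-y_1<0$, $y_1-y_2<0$, $y_2-y_3<0$, $y_1-y_3<-1$, $y_0-y_3<-2$, $y_0-y_2<-1$, i.e.\ the asserted set. For each $\Delta^i$ one rewrites the relations $\CR_{ij}^\alpha$, that is $\xqz{x_0-x_j}-\xqz{x_0-x_i}=\xqz{x_i-x_j}+\alpha$, by feeding in the boundary equalities that are already part of the definition of the same $\Delta^i$ (for example, inside $\Delta^1$ one has $\xqz{x_0-x_1}=k_{01}-1=-1$ and $\xqz{x_2-x_3}=k_{23}-1=-1$, so $\CR_{13}^1$ and $\CR_{23}^1$ collapse to $\xqz{x_0-x_3}=\xqz{x_1-x_3}$ and $\xqz{x_0-x_3}=\xqz{x_0-x_2}$), and one uses $\CK_{02}-1=-2$, $\CK_{03}-1=-3$, $\CK_{13}-1=-2$. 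Doing this for $i=1,\dots,5$ reproduces the five sets in the statement.

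The argument is entirely a bookkeeping computation and has no genuine obstacle. The point that needs the most care is the simplification of the floor relations $\CR_{ij}^\alpha$ inside each $\Delta^i$: one must combine them with the other conditions appearing in the same $\Delta^i$ before simplifying, so that the rewritten conditions match the stated ones exactly; a secondary point is to make sure in the K\"unneth step that $\mathcal{E}$ really is \emph{strong}, since that is what forces all the $k_{ij}$ to equal $0$ (and hence gives the clean values $\CK_{02}=\CK_{13}=-1$, $\CK_{03}=-2$).
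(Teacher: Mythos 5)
Your route is the same as the paper's: the proposition is obtained purely by substituting the constants $k_{ij}$, $\CK_{02}=\CK_{13}=-1$, $\CK_{03}=-2$ into Theorem \ref{thmmain} and simplifying the relations $\CR_{ij}^{\alpha}$ inside each $\Delta^i$ using the boundary equalities already present there (e.g.\ in $\Delta^1$ the equalities $\xqz{x_0-x_1}=-1$ and $\xqz{x_2-x_3}=-1$ turn $\CR_{13}^1$, $\CR_{23}^1$ into $\xqz{x_0-x_3}=\xqz{x_1-x_3}$ and $\xqz{x_0-x_3}=\xqz{x_0-x_2}$). The paper offers nothing beyond this substitution, and your simplifications reproduce the five sets correctly, so structurally the write-up matches.

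There is, however, a genuine flaw in your verification of the constants, at the pair $(E_1,E_2)=(\CO(1,0),\CO(0,1))$. The K\"unneth computation you invoke gives
\[
\Hom^k\ykh{\CO(1,0),\CO(0,1)}=H^k\ykh{X,\CO(-1,1)}=\bigoplus_{p+q=k}H^p\ykh{\BP^1,\CO(-1)}\otimes H^q\ykh{\BP^1,\CO(1)}=0
\]
for every $k$, since all cohomology of $\CO_{\BP^1}(-1)$ vanishes. So it is not true that this $\Hom^{\bullet}$ is nonzero and concentrated in degree $0$: the set $\hkh{k:\Hom^k(E_1,E_2)\neq 0}$ is empty, and strongness of $\CE$ only forces vanishing in nonzero degrees, not nonvanishing in degree $0$. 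Under the definition of $k_{ij}$ as a minimum in $\BZ$, the value $k_{12}$ is therefore not literally $0$ (morally $k_{12}=+\infty$, i.e.\ this pair imposes no Ext-constraint on $A_0$), and that would change $\CK_{02}$, $\CK_{13}$, $\CK_{03}$, $\Theta_1$ and $\Delta$. The paper asserts $k_{ij}=0$ for all $i<j$ without comment, so your conclusion coincides with the printed statement, but the justification you give does not establish $k_{12}=0$; this is exactly the point that needs to be addressed (or the convention for vanishing $\Hom^{\bullet}$ made explicit) rather than absorbed into the ``short inspection'' of the K\"unneth formula.
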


\begin{remark}
The author's original motivation was to give a detail description of stability conditions on $X$ and excepted to get a contractible connected component of the spaces of stability conditions on $\DB(X)$.  

Compared to the $\BP^2$ case \cite{lichunyi2017,macri2004}, $\BP^1\times \BP^1$ case is more difficult. As compute in the Theorem \ref{thmmain}, $\Delta$ is the union of five sets. We can not give a simple description of $\Delta$ now. We wish to prove that $\Theta_2$ is contractible.  Another problem is that the strong exceptional collection on $\BP^1\times \BP^1$ is not stong after the mutation. The space of  stability condition generated by exceptional collectioon on $\BP^1\times \BP^1$  is still not clear.
\end{remark}

\subsection*{Acknowledgements} I would like to thank to Professor Sen Hu and Dr. Teng Huang for their encouragement and valuable comments.

\renewcommand{\refname}{Reference}
\bibliographystyle{unsrt}

\end{document}